\theoremstyle{plain}
\newtheorem{theorem}{Theorem}[section]
\newtheorem{proposition}[theorem]{Proposition}
\newtheorem{lemma}[theorem]{Lemma}
\newtheorem{corollary}[theorem]{Corollary}
\theoremstyle{definition}
\theoremstyle{remark}
\renewcommand{\thefootnote}{\arabic{footnote}}
\def\R{\mathbb R}
\def\N{\mathbb N}
\def\Z{\mathbb Z}
\def\al{\alpha}
\def\be{\beta}
\def\ga{\gamma}
\def\de{\delta}
\def\De{\Delta} 
\def\lam{\lambda}
\def\ep{\epsilon}
\def\na{\nabla}
\def\pa{\partial}
\def\lt{\left}
\def\rt{\right}
\def\i0i{\int_0^\infty}
\def\irn{\int_{\R^n}}
\numberwithin{equation}{section}
\title{The sharp second order Caffareli-Kohn-Nirenberg inequality and stability estimates for the sharp second order uncertainty principle}
\author{Anh Tuan Duong\footnote{School of Applied Mathematics and Informatics, Hanoi University of Science and Technology, 1 Dai Co Viet, Hai Ba Trung, Ha noi, Viet Nam.}\, and Van Hoang Nguyen\footnote{Department of Mathematics, FPT University, Ha Noi, Viet Nam.} 
}
\begin{document}
\maketitle


\renewcommand{\thefootnote}{}

\footnote{Email: \href{mailto: Anh Tuan Duong<tuan.duonganh@hust.edu.vn>}{tuan.duonganh@hust.edu.vn}; \href{mailto: Van Hoang Nguyen <vanhoang0610@yahoo.com>}{vanhoang0610@yahoo.com} and \href{mailto:Van Hoang Nguyen <hoangnv47@fe.edu.vn>}{hoangnv47@fe.edu.vn}.}

\footnote{2010 \emph{Mathematics Subject Classification\text}: 26D10, 46E35, 26D15}

\footnote{\emph{Key words and phrases\text}: Caffarelli--Kohn--Nirenberg inequalities, uncertainty principle, sharp constant, extremal functions, stability estimates}

\renewcommand{\thefootnote}{\arabic{footnote}}
\setcounter{footnote}{0}

\begin{abstract}
In this paper we prove a class of second order Caffarelli-Kohn-Nirenberg inequalities which contains the sharp second order uncertainty principle recently established by Cazacu, Flynn and Lam \cite{CFL2020} as a special case. We also  show the sharpness of our inequalities for several classes of parameters. Finally, we prove a stability version of the  sharp second order uncertainty principle of Cazacu, Flynn and Lam by showing that the difference of both sides of the inequality controls the distance to the set of  extremal functions in $L^2$ norm of gradient of functions.

\end{abstract}

\section{Introduction}
The Heisenberg uncertainty principle in quantum mechanics states that the position and the momentum of a given particle cannot both be determined exactly at the same time (see \cite{Heisenberg}). The rigorous mathematical formulation of this principle is established by Kennard \cite{Kennard} and Weyl \cite{Weyl} (who attributed it to Pauli) stating that the function itself and its Fourier
transform cannot be sharply localized at the origin simultaneously. Mathematically, the  Heisenberg-Pauli-Weyl uncertainty principle is described by the following inequality
\begin{equation}\label{eq:HPW}
\irn |\na u|^2 dx \irn |x|^2 |u|^2 dx \geq \frac{n^2}4 \Big(\irn |u|^2 dx\Big)^2
\end{equation}
for any function $u\in H^1(\R^n)$ (the first order Sobolev space in $\R^n$) such that $\irn |x|^2 |u|^2 dx < \infty$. It is well known that the constant $n^2/4$ is sharp and is attained only by Gaussian functions (see \cite{Folland}). 

In \cite{Xia07}, Xia extends the inequality \eqref{eq:HPW} and obtain the following inequality
\begin{equation}\label{eq:Xia}
\frac{n-t\ga}{t} \irn \frac{|u|^t}{|x|^{t\ga}} dx \leq \Big(\irn \frac{|\na u|^p}{|x|^{\al p}} dx\Big)^{\frac1p} \Big(\irn \frac{|u|^{\frac{p(t-1)}{p-1}}}{|x|^{\beta}} dx\Big)^{\frac{p-1}p}
\end{equation}
where $n\geq 2$, $1< p< t$, and $\alpha, \beta, \gamma$ satisfying the following conditions
\begin{equation*}
n -\alpha p >0,\quad n -\beta >0,\quad n -t\gamma >0,
\end{equation*}
and the balanced condition
\begin{equation*}
\gamma = \frac{1 + \alpha}{t} + \frac{\beta}{tp}.
\end{equation*}
Moreover, when $1 + \al -\frac \be r >0$ and
$$n-\beta < \Big(1 + \al -\frac \be r\Big) \frac{p(t-1)}{t-p}$$
then the inequality \eqref{eq:Xia} is sharp and the extremal functions are given by
$$u(x) = (\lam + |x|^{1 + \al -\frac \be r}\Big)^{\frac{1-p}{t-p}},\qquad \lam >0.$$
The inequality \eqref{eq:Xia} is extended to the Riemannian manifolds in \cite{NguyenProcA}, the Finsler manifolds in \cite{HKZ} and the stratified Lie groups in \cite{NguyenAASFM}.

Both the Heisenberg-Pauli-Weyl principle \eqref{eq:HPW} and the Xia inequality \eqref{eq:Xia} belong to a larger class of the first order interpolation inequalities which are called Caffarelli-Kohn-Nirenberg (CKN) inequalities established in \cite{CKN84} to study the Navier-Stokes equation and the regularity of particular { solutions }\cite{CKN82}. The class of CKN inequalities contains many well-known inequalities such as the Sobolev inequality, the Hardy inequality, the Hardy-Sobolev inequality, the Gagliardo-Nirenberg inequality, etc. They play an important role in theory of partial differential equations and have been extensively studied in many settings. { Concerning to the sharp version of CKN inequalities, we refer the reader to the papers \cite{Talenti,Aubin,Lieb,DD1,DD2,CNV,CWang,CC,Costa}.
}

The higher order CKN inequalities were established by Lin \cite{Lin}. In contrast to the first order inequalities, much less is known on the sharp version of the higher order CKN inequalities except the Rellich inequality \cite{Rellich} and the sharp higher order Sobolev inequality \cite{Lieb,Cotsiolis}. In recent paper \cite{CFL2020}, Cazacu, Flynn and Lam proved the following sharp second order uncertainty principle which is a special case of the second order CKN inequalities
\begin{equation}\label{eq:CFL}
\irn |\Delta u|^2 dx \irn |x|^2 |\na u|^2 dx \geq \Big(\frac{n+2}2\Big)^2 \Big(\irn |\na u|^2 dx\Big)^2.
\end{equation}
The constant $(n+2)^2/4$ above is sharp and is attained by Gaussian functions. In fact, the inequality \eqref{eq:CFL} without the sharp constant can be derived from \eqref{eq:HPW} and Cauchy-Schwartz inequality (see the comment in the introduction in \cite{CFL2020}). The inequality \eqref{eq:CFL} is motivated by an open question of Maz'ya \cite{Maz'ya} on finding the sharp constant in \eqref{eq:HPW} when we replace $u$ by a divergence-free vector field $U$. In particular, the inequality \eqref{eq:CFL} answers affirmatively the question of Maz'ya in the case $n=2$.

The first aim in this paper is to extend the inequality \eqref{eq:CFL} to a larger class of parameters in spirit of \eqref{eq:Xia}. For $\alpha, \beta$ satisfying the conditions $n -2\alpha >0$ and $n -\beta >0$, we denote by $H^2_{\al,\beta}(\R^n)$ the second order Sobolev space which is completion of $C_0^\infty(\R^n)$ under the norm 
$$\|u\|_{H^2_{\al,\beta}(\R^n)} = \Big(\irn |\Delta u|^2 |x|^{-2\al} dx + \irn |\na u|^2 |x|^{-\beta} dx\Big)^{\frac12},\; u\in C_0^\infty(\R^n).$$
Then the first main result in this paper reads as follows.

\begin{theorem}\label{Main}
Let $n\geq 1$ and $\alpha\in \R$ satisfy $n -2\al >0$, $n + 2\al >0$ and $n+ 2 + 4\al >0$. Then the following inequality
\begin{equation}\label{eq:CKNalpha}
\irn \frac{|\Delta u|^2}{|x|^{2\alpha}} dx \irn |x|^{2\al} |\na u\cdot x|^2 dx \geq \Big(\frac{n+4\al + 2}2\Big)^2 \Big(\irn |\na u|^2 dx \Big)^2
\end{equation}
holds true for any function $u \in H_{\al,-2-2\al}^2(\R^n)$. Furthermore, if $1+ \al >0$ then the inequality \eqref{eq:CKNalpha} is sharp and is attained by function 
$$U_0(x) = \exp\left(-\frac{|x|^{2(1+\al)}}{2(1+\al)}\right).$$
\end{theorem}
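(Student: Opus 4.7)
The plan is to prove \eqref{eq:CKNalpha} by a single perfect-square identity engineered so that the candidate extremizer $U_0$ annihilates the integrand, after which sharpness follows automatically. By density of $C_0^\infty(\R^n)$ in $H^2_{\alpha,-2-2\alpha}(\R^n)$, it suffices to establish \eqref{eq:CKNalpha} for $u\in C_0^\infty(\R^n)$. For any $\lambda\in\R$, I would start from the manifestly non-negative quantity
\[
0\leq \int_{\R^n}\left(\frac{\Delta u}{|x|^\alpha} + \lambda\, |x|^\alpha (x\cdot\nabla u) + \lambda(n+2\alpha)\, |x|^\alpha u\right)^2 dx.
\]
The coefficient $\lambda(n+2\alpha)$ in front of $u$ is chosen precisely so that the inner expression vanishes identically when $\lambda=1$ and $u=U_0$: using $\nabla U_0 = -|x|^{2\alpha} x\, U_0$, one computes $\Delta U_0 = |x|^{4\alpha+2}U_0 - (n+2\alpha)|x|^{2\alpha}U_0$, after which the three terms above telescope to $0$.

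Expanding the square produces three diagonal integrals and three cross integrals. Routine integration by parts yields $\int (\Delta u)(x\cdot\nabla u)\,dx = \frac{n-2}{2}\int|\nabla u|^2\,dx$, $\int (\Delta u)\,u\,dx = -\int|\nabla u|^2\,dx$, and, using $\mathrm{div}(|x|^{2\alpha}x) = (n+2\alpha)|x|^{2\alpha}$, $\int |x|^{2\alpha}u\,(x\cdot\nabla u)\,dx = -\frac{n+2\alpha}{2}\int|x|^{2\alpha} u^2\,dx$. The algebraic point is that, with the specific coefficient $\mu=\lambda(n+2\alpha)$ attached to the $u$-term, the $\int|x|^{2\alpha}u^2$ contributions from the $\mu^2$ diagonal piece and from the $2\lambda\mu$ cross term between $u$ and $x\cdot\nabla u$ cancel exactly. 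Writing $A=\int|\Delta u|^2/|x|^{2\alpha}\,dx$, $B=\int |x|^{2\alpha}(x\cdot\nabla u)^2\,dx$, $C=\int|\nabla u|^2\,dx$, the inequality reduces to
\[
0\leq A + \lambda^2 B - \lambda(n+2+4\alpha)\,C \qquad \text{for every }\lambda\in\R.
\]
Viewed as a non-negative quadratic in $\lambda$ (if $B=0$ then $x\cdot\nabla u\equiv 0$ forces $u\equiv 0$ and there is nothing to prove), its discriminant must be non-positive, giving $4AB\geq (n+2+4\alpha)^2 C^2$, which is exactly \eqref{eq:CKNalpha}.

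Sharpness under the extra hypothesis $1+\alpha>0$ is then automatic: with $\lambda=1$ the integrand of the perfect square vanishes identically on $U_0$, so equality holds in every step; the condition $1+\alpha>0$ is used only to guarantee that $U_0$ has super-polynomial decay at infinity so that all integrals involving $U_0$ converge. The main technical obstacle I anticipate is the rigorous justification of the three integrations by parts at the level of the weighted Sobolev space $H^2_{\alpha,-2-2\alpha}(\R^n)$: this requires approximation by $C_0^\infty$ functions together with cut-offs near $0$ (to tame the singular weight $|x|^{-2\alpha}$) and near infinity, and the size hypotheses $n\pm 2\alpha>0$ and $n+2+4\alpha>0$ are precisely what make the boundary contributions vanish in the limit and the limiting identities well-posed.
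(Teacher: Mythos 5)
Your proposal is correct, and it is genuinely different from — and in fact cleaner than — the paper's own argument. The paper first proves the additive identity
\[
\irn \frac{|\Delta u + \nabla u\cdot x\,|x|^{2\alpha}|^2}{|x|^{2\alpha}}\,dx \;=\; \irn\frac{|\Delta u|^2}{|x|^{2\alpha}}\,dx + \irn |x|^{2\alpha}|\nabla u\cdot x|^2\,dx + (n-2)\irn|\nabla u|^2\,dx,
\]
and then, via the factorization $u = vU_0$ and a delicate integration-by-parts analysis on $B_\epsilon^c$ (needed because $v = u\,U_0^{-1}$ fails to be $C^2$ at the origin), shows that the left-hand side is bounded below by $2(n+2\alpha)\irn|\nabla u|^2\,dx$. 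This yields the additive estimate $A + B \geq (n+2+4\alpha)C$ in your notation, after which the product form is obtained by a separate scaling argument $u\mapsto\lambda^{(n-2)/2}u(\lambda\cdot)$ and optimization over $\lambda$. Your proof bundles all of this into one non-negative quadratic in $\lambda$: by calibrating the auxiliary term $\lambda(n+2\alpha)|x|^\alpha u$ so that the $\lambda^2\int|x|^{2\alpha}u^2$ diagonal contribution cancels exactly against the $QR$ cross term, you get $A + \lambda^2 B - \lambda(n+2+4\alpha)C \geq 0$ directly, and the discriminant test gives the product inequality in one step. This avoids the factorization $u = vU_0$ entirely, eliminates the paper's boundary-term bookkeeping near the origin, and removes the need for the paper's separate limiting argument in the degenerate case $\alpha = -1$ (where $U_0$ is not defined). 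The free parameter $\lambda$ plays the role of the paper's dilation parameter. Two small points to tidy in a write-up: when $B = 0$ the cleanest conclusion is that $A - \lambda(n+2+4\alpha)C\geq 0$ for all $\lambda$ forces $C = 0$ (since $n+2+4\alpha>0$), which is slightly more direct than arguing $u\equiv 0$; and, as you anticipate, the hypotheses $n\pm 2\alpha>0$ are exactly what kill the $\partial B_\epsilon$ boundary term in the integration by parts for $\int |x|^{2\alpha}u(x\cdot\nabla u)\,dx$ and guarantee all diagonal integrals are finite for $u\in C_0^\infty(\R^n)$, while $n+2+4\alpha>0$ (together with $1+\alpha>0$) is needed to ensure $U_0\in H^2_{\alpha,-2-2\alpha}(\R^n)$ for the sharpness claim. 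Your observation that the quadratic in $\lambda$ vanishes at $\lambda = 1$ when $u = U_0$, forcing a double root and hence equality in the discriminant bound, is a correct and crisp way to read off sharpness.
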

{ In particular, when $\alpha=0,$ Theorem \ref{Main} implies the following.
\begin{corollary}\label{Maincor}
	Let $n\geq 1$. Then there holds
	\begin{equation}\label{eq:CKNalphacor}
		\irn |\Delta u|^2 dx \irn |\na u\cdot x|^2 dx \geq \Big(\frac{n+ 2}2\Big)^2 \Big(\irn |\na u|^2 dx \Big)^2 
	\end{equation}
 for any $u\in H^2_{0,-2}(\R^n).$ This inequality is sharp and is attained by the Gaussian function $\exp(-|x|^2/2)$.
\end{corollary}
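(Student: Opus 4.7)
My plan is to prove the corollary as a direct specialization of Theorem \ref{Main}: setting $\alpha=0$, the three hypotheses $n-2\alpha>0$, $n+2\alpha>0$, $n+4\alpha+2>0$ all collapse to $n>0$, trivially true since $n\geq 1$, and \eqref{eq:CKNalpha} becomes exactly \eqref{eq:CKNalphacor}. Since $1+\alpha=1>0$, the sharpness part of Theorem \ref{Main} also applies, and the extremal $\exp(-|x|^{2(1+\alpha)}/(2(1+\alpha)))$ reduces to the Gaussian $\exp(-|x|^2/2)$.

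For self-containedness, and as a preview of the method that proves Theorem \ref{Main}, I would also outline how one can prove \eqref{eq:CKNalphacor} directly. The idea is to introduce the scalar
\[
F[u]:=x\cdot\nabla u+n\,u,
\]
engineered so that $\Delta U=-F[U]$ holds pointwise for the Gaussian $U(x)=\exp(-|x|^2/2)$, which guarantees equality in the Cauchy--Schwarz step at the extremal. Cauchy--Schwarz in the form $\bigl|\irn \Delta u \cdot F[u]\,dx\bigr|^2 \le \irn |\Delta u|^2\,dx \cdot \irn F[u]^2\,dx$ will be the master inequality, and the corollary follows once $\irn F[u]^2\,dx$ is identified with $\irn |x\cdot\nabla u|^2\,dx$ and $\irn \Delta u\cdot F[u]\,dx$ with $-\tfrac{n+2}{2}\irn|\nabla u|^2\,dx$.

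The key computations go as follows. First, expanding $F[u]^2$, the cross term $2n\irn u(x\cdot\nabla u)\,dx = n\irn x\cdot\nabla(u^2)\,dx$ integrates (via $\nabla\cdot x=n$) to $-n^2\irn u^2\,dx$, exactly cancelling the pure $u^2$ contribution coming from the expansion, so $\irn F[u]^2\,dx=\irn |x\cdot\nabla u|^2\,dx$ cleanly. Second, $\irn \Delta u\cdot F[u]\,dx = \irn \Delta u\,(x\cdot\nabla u)\,dx + n\irn \Delta u\cdot u\,dx$: the first summand equals $\tfrac{n-2}{2}\irn|\nabla u|^2\,dx$ by the classical Pohozaev/virial identity (integrating $\nabla\cdot(\nabla u\,(x\cdot\nabla u)-\tfrac{x}{2}|\nabla u|^2)$), and the second equals $-n\irn|\nabla u|^2\,dx$ by a single integration by parts, yielding $-\tfrac{n+2}{2}\irn|\nabla u|^2\,dx$ in total.

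The main technical point requiring care is justifying the integrations by parts for arbitrary $u\in H^2_{0,-2}(\R^n)$; I plan to handle this by density, establishing the identities first for $u\in C_c^\infty(\R^n)$ and passing to the limit using the weighted norms defining $H^2_{0,-2}(\R^n)$. For sharpness, one checks that the Gaussian belongs to $H^2_{0,-2}(\R^n)$ (all integrals converge thanks to the Gaussian-type decay) and satisfies $\Delta U=-F[U]$ pointwise -- since $\nabla U=-xU$ gives $x\cdot\nabla U=-|x|^2 U$ and $\Delta U=(|x|^2-n)U$ -- forcing equality in Cauchy--Schwarz and confirming that the Gaussian attains \eqref{eq:CKNalphacor}.
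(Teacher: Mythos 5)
Your first paragraph is exactly how the paper handles the corollary: a direct specialization of Theorem~\ref{Main} at $\alpha=0$, with the three hypotheses collapsing to $n>0$ and $U_0$ becoming $e^{-|x|^2/2}$. Your second, direct proof is correct and takes a genuinely different, cleaner route. The paper's proof of Theorem~\ref{Main} (at $\alpha=0$) first establishes the expansion \eqref{eq:eq1}, then uses the factorization $u=vU_0$ together with a delicate integration-by-parts argument to show $\irn |\Delta u+\nabla u\cdot x|^2\,dx\geq 2n\irn|\nabla u|^2\,dx$, combines these into the sum inequality $\irn|\Delta u|^2\,dx+\irn|x\cdot\nabla u|^2\,dx\geq(n+2)\irn|\nabla u|^2\,dx$, and finally optimizes over the dilations $u_\lambda(x)=\lambda^{(n-2)/2}u(\lambda x)$ to reach the product form. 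Your single application of Cauchy--Schwarz to $\irn\Delta u\,F[u]\,dx$ with $F[u]=x\cdot\nabla u+nu$ reaches the product form in one stroke: the $+nu$ shift is calibrated exactly so that the cross term cancels the pure $u^2$ term in $\irn F[u]^2\,dx$, giving $\irn F[u]^2\,dx=\irn|x\cdot\nabla u|^2\,dx$, while the Pohozaev identity plus one further integration by parts identify $\irn\Delta u\,F[u]\,dx$ with $-\tfrac{n+2}{2}\irn|\nabla u|^2\,dx$. The dilation optimization is absorbed by Cauchy--Schwarz, the factorization $u=vU_0$ is not needed, and your verification $\Delta U=-F[U]$ for the Gaussian is precisely the Cauchy--Schwarz equality case. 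The one technical point to flag is that $\irn|\nabla u|^2\,dx$ must be shown finite and controlled by the $H^2_{0,-2}$ norm before passing to the limit from $C_c^\infty$; this follows from the inequality itself (or equivalently from the sum form), just as the paper implicitly uses. It is worth observing that your construction generalizes: taking $F[u]=x\cdot\nabla u+(n+2\alpha)u$ against the weight $|x|^{2\alpha}$ reproduces all of Theorem~\ref{Main} by the same two computations, so the Cauchy--Schwarz route you outline would also streamline the paper's general argument.
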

Evidently, we have $|\na u\cdot x|\leq |\na u| |x|$. Hence, our inequality \eqref{eq:CKNalphacor} is still stronger than \eqref{eq:CFL}. 

Recall that the proof of \eqref{eq:CFL} in \cite{CFL2020} is quite long and complicated. The authors have used the decomposition of function $u$ into spherical harmonic and integral estimates for radial functions.

In order to  prove \eqref{eq:CKNalpha}, we develop a new approach which is completely different with the one of Cazacu, Flynn and Lam. Indeed, our approach is based on establishing a new identity}
$$\irn \frac{|\Delta u + \na u\cdot x |x|^{2\alpha}|^2}{|x|^{2\alpha}} dx = \irn \frac{|\Delta u|^2}{|x|^{2\alpha}} dx + \irn |x|^{2\alpha} |\na u \cdot x|^2  dx - (n-2)\irn |\na u|^2 dx.$$
Then,  using a factorization of $u$ as $u = v U_0$, we are able to show that
$$\irn \frac{|\Delta u + \na u\cdot x |x|^{2\alpha}|^2}{|x|^{2\alpha}} dx \geq 2(n+ 2\alpha) \irn |\na u|^2 dx.$$
Combining two estimates above and a simple minimizing argument, we obtain \eqref{eq:CKNalpha}. More details in the proof are given in Section \S2 below.

For more general on the parameters as in \eqref{eq:Xia}, we obtain the following inequality for radial functions.
\begin{theorem}\label{Mainradial}
Let $n\geq 1$, $t \geq 2$ and $\al, \beta, \ga$ be such that
\begin{equation}\label{eq:khatich}
n -2\alpha >0,\quad n -\beta >0,\quad n -t\gamma >0
\end{equation}
and
\begin{equation}\label{eq:balance}
\gamma = \frac{1 + \alpha}{t} + \frac{\beta}{2t}.
\end{equation}
Then the following inequality 
\begin{equation}\label{eq:2CKN}
\irn \frac{|\Delta u|^2}{|x|^{2\alpha}} dx \irn \frac{|\nabla u|^{2(t-1)}}{|x|^\beta} dx \geq\lt(\frac{{ n}+t(1+2 \alpha -\gamma)}t\rt)^2  \lt(\int_{\R^n} \frac{|\nabla u|^t}{|x|^{t\gamma}}dx\rt)^2
\end{equation}
holds true for any radial function $u \in H^2_{\al,\beta}(\R^n)$. Moreover, under the following conditions
\begin{equation}\label{eq:welldefineu}
(1+ 2\al)(t-2) + 1 + \al -\frac\be 2 >0, \, t< 3 + \al -\frac\be 2,
\end{equation}
\begin{equation}\label{eq:Deltau}
n+ 2\al > 0,
\end{equation}
and
\begin{equation}\label{eq:khatichu}
n -\be < \frac{2(t-1)}{t-2}(1 + \al -\frac\beta 2),
\end{equation}
then the constant $(n+ t(1 +2\al -\ga))^2/t^2$ is sharp and is attained only up to a dilation and a multiplicative constant by the function the form
$$U_1(x) = \int_{|x|}^\infty s^{1 + 2\alpha} \exp\Big(-\frac{s^{1+ \alpha -\frac\beta 2}}{1 + \alpha -\frac\beta2}\Big) ds,$$
if $t =2$, and
$$
U_2(x) =\int_{|x|}^\infty r^{1+ 2\al} \Big(1 + (t-2)\frac{r^{(1+2\al)(t-2) + 1 + \al -\frac\be2}}{(1+2\al)(t-2) + 1 + \al -\frac\be2}\Big)^{\frac1{2-t}} dr,
$$
if $t >2$.
\end{theorem}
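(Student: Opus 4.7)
The plan is to exploit radial symmetry to reduce everything to one–dimensional weighted integrals, then use a pointwise substitution that absorbs the Laplacian into a single derivative, and finally close the estimate with the Xia-style integration-by-parts plus Cauchy--Schwarz that already governs \eqref{eq:Xia} in the case $p=2$.

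First, writing $u(x)=U(r)$ with $r=|x|$, one has $|\nabla u|=|U'(r)|$ and $\Delta u = U''(r) + \tfrac{n-1}{r}U'(r)$, so all three integrals in \eqref{eq:2CKN} become one-dimensional integrals against $r^{n-1}\,dr$. The key substitution is $g(r):=U'(r)/r^{1+2\alpha}$, giving
$$\Delta u = g'(r)\, r^{1+2\alpha} + (n+2\alpha)\, g(r)\, r^{2\alpha}.$$
Squaring, multiplying by $r^{n-1-2\alpha}$, and integrating, the cross-term $2(n+2\alpha)\int_0^\infty g' g\, r^{n+2\alpha}\,dr = -(n+2\alpha)^2\int_0^\infty g^2\, r^{n-1+2\alpha}\,dr$ after one integration by parts cancels the pure $g^2$ contribution exactly, so that
$$\irn \frac{|\Delta u|^2}{|x|^{2\alpha}}\,dx \;=\; \omega_{n-1}\int_0^\infty (g'(r))^2\, r^{n+1+2\alpha}\,dr.$$
Boundary terms vanish on the dense subspace of smooth compactly supported radial functions, to which the final inequality extends by a standard density argument in $H^2_{\alpha,\beta}(\R^n)$.

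Next, set $N:=n+t(1+2\alpha-\gamma)$ and rewrite the other two integrals in terms of $g$:
$$\irn \frac{|\nabla u|^t}{|x|^{t\gamma}}\,dx = \omega_{n-1}\int_0^\infty |g|^t\, r^{N-1}\,dr,\qquad \irn \frac{|\nabla u|^{2(t-1)}}{|x|^\beta}\,dx = \omega_{n-1}\int_0^\infty |g|^{2(t-1)}\, r^{n-1+2(t-1)(1+2\alpha)-\beta}\,dr.$$
Integrating the first of these against $d(r^N/N)$ gives
$$\int_0^\infty |g|^t\, r^{N-1}\,dr = -\frac{t}{N}\int_0^\infty g'\,|g|^{t-2} g\, r^N\,dr,$$
and Cauchy--Schwarz applied after splitting $r^N = r^{(n+1+2\alpha)/2}\cdot r^{N-(n+1+2\alpha)/2}$ yields
$$\Big(\int_0^\infty |g|^t\, r^{N-1}\,dr\Big)^2 \leq \frac{t^2}{N^2}\,\Big(\int_0^\infty (g')^2\, r^{n+1+2\alpha}\,dr\Big)\Big(\int_0^\infty |g|^{2(t-1)}\, r^{2N-n-1-2\alpha}\,dr\Big).$$
The balance condition \eqref{eq:balance} is exactly what makes $2N-n-1-2\alpha = n-1+2(t-1)(1+2\alpha)-\beta$, so the second factor is the $g$-form of $\int|\nabla u|^{2(t-1)}|x|^{-\beta}\,dx$. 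Assembling Steps 2 and 3 produces \eqref{eq:2CKN} with the stated constant.

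For sharpness, equality in Cauchy--Schwarz forces $g'(r) = -c\,|g|^{t-2}g(r)\, r^{N-n-1-2\alpha}$, and a short calculation using \eqref{eq:balance} gives $N-n-2\alpha = 1+\alpha-\tfrac{\beta}{2}$ when $t=2$ and $N-n-2\alpha = (1+2\alpha)(t-2)+1+\alpha-\tfrac{\beta}{2}$ when $t>2$. Integrating this ODE (exponentially for $t=2$, polynomially with exponent $1/(2-t)$ for $t>2$) and then integrating $U'(r)=g(r)r^{1+2\alpha}$ from $r$ to $\infty$ recovers precisely $U_1$ and $U_2$; the hypotheses \eqref{eq:welldefineu}--\eqref{eq:khatichu} are exactly those guaranteeing that the corresponding weighted integrals are convergent and that $U_j\in H^2_{\alpha,\beta}(\R^n)$, so equality is attained. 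The main technical obstacle is the identity in Step 2: the cancellation of the $g^2$ term relies on the algebraic coincidence that the cross-term's integration by parts produces precisely the coefficient $-(n+2\alpha)^2$ needed to kill the quadratic contribution; once this is established, the rest of the argument is a direct Xia-type computation, but one must also keep careful track of the boundary terms at $0$ and $\infty$ under the various parameter ranges.
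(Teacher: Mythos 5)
Your proposal is correct and follows essentially the same route as the paper: a radial reduction, integration by parts to bring in the second‐derivative term, a Cauchy--Schwarz/H\"older step calibrated by the balance condition \eqref{eq:balance}, and equality analysis to produce the ODE and the profiles $U_1,U_2$. The only difference is cosmetic — you introduce $g=u'/r^{1+2\alpha}$ from the outset so that the key identity $\irn |\Delta u|^2 |x|^{-2\alpha}\,dx=\omega_{n-1}\int_0^\infty(g')^2 r^{n+1+2\alpha}\,dr$ appears as a cancellation of the cross term, whereas the paper reaches the equivalent identity $\int_0^\infty\big(u''+\tfrac{n-1}{r}u'-\tfrac{n+2\alpha}{r}u'\big)^2 r^{n-2\alpha-1}\,dr=\int_0^\infty(\Delta u)^2 r^{n-2\alpha-1}\,dr$ directly by integration by parts and only introduces $w=u'/r^{1+2\alpha}$ when solving the Euler--Lagrange ODE.
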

{ In the special case where }$\alpha = \beta =0$, { $t=2$} and $\gamma =1/2$, we { recover} the second result of Cazacu, Flynn and Lam in \cite{CFL2020} for radial functions in $H_{0,0}^2(\R^n)$ which is the sharp second order Hydrogen uncertainty principle
\begin{equation}\label{eq:CFL2}
\irn |\Delta u|^2 dx \irn |\na u|^2 dx \geq \frac{(n+1)^2}4 \Big(\irn \frac{|\na u|^2}{|x|} dx\Big)^2.
\end{equation}
In fact, in that paper, Cazacu, Flynn and Lam proved the inequality \eqref{eq:CFL2} for any function $u$ (without radiality assumption) for any $n \geq 5$. The condition $n\geq 5$ appears due to the technical restrictions of decomposing a smooth function into spherical harmonics. They conjectured that the inequality \eqref{eq:CFL2} still holds for $n=2,3,4$. The inequality \eqref{eq:CFL2} is sharp and an extremal function is given by $u(x) = c (1 + a |x|)e^{-a |x|}$ with $c \in \R$ and $a >0$.

{ In the general case of parameters, comparing \eqref{eq:2CKN}} with the first order inequality \eqref{eq:Xia}, the sharp constant changes from $((n-\gamma t)/t)^2$ to $((n +t(1 +2\al -\gamma))/t)^2$. Moreover, to obtain the sharpness and the attainability of constant, we need some more conditions on the parameters (see \eqref{eq:welldefineu} and \eqref{eq:Deltau}). Indeed, these conditions ensure that the function $U_1$ (and $U_2$) is well-defined, and $\Delta U_1$ (and $\Delta U_2$) exists in the distributional sense and belongs to $H_{\al,\beta}^2(\R^n)$.

In the non-radial case, following the approach of Cazacu, Flynn, and Lam by using spherical harmonics, we are able to establish the sharp second order CKN inequalities (see Theorem \ref{MainXiatype} below) which extends \eqref{eq:2CKN} to any function in $H_{\al,\beta}^2(\R^n)$ but with some restrictions on the dimension $n$ (as the case of the second order hydrogen uncertainty principle \eqref{eq:CFL2}). However, this approach works only for $t=2$.

 To state our next result, let us define for $n, \al, \beta, \ga$ as in Theorem \ref{Mainradial} with $t =2$ and $k\geq 0$
\begin{equation}\label{eq:Ak}
A_{n,\al,k}(g) =\int_0^\infty (g'')^2 r^{n+2k -2\al -1} dr +(1 + 2\al) (n+2k-1) \int_0^\infty (g')^2 r^{n +2k -2\al -3} dr
\end{equation}
\begin{equation}\label{eq:Bk}
B_{n,\beta,k}(g) = \int_0^\infty (g')^2 r^{n+2k -\beta -1}dr + \beta k \int_0^\infty g^2 r^{n+2k -\beta -3} dr\qquad\qquad\qquad\qquad\quad\;\;
\end{equation}
\begin{equation}\label{eq:Ck}
C_{n,\ga, k}(g) = \int_0^\infty (g')^2 r^{n+2k -2\ga -1}dr + 2\gamma k \int_0^\infty g^2 r^{n+2k -2\ga -3} dr\qquad\qquad\qquad\qquad\;
\end{equation}
and
\begin{equation}\label{eq:sharpAk}
A_k(n,\al,\beta) = \inf_{g\not\equiv 0}\frac{A_{n,\al,k}(g)B_{n,\beta,k}(g)}{C_{n,\ga,k}(g)^2}
\end{equation}
where the infimum is taken on all function $g\in C^2([0,\infty))$ such that $A_{n,\al,\beta}(g)$ and $B_{n,\be,k}(g)$ are finite.  Our next result is given in the following theorem.
\begin{theorem}\label{MainXiatype}
Let $n \geq 1$ and $\al, \beta,\ga$ satisfy the conditions of Theorem \ref{Mainradial} with $t=2$, then we have
\begin{equation}\label{eq:constant}
\irn \frac{|\Delta f|^2}{|x|^{2\al}} dx\irn \frac{|\na f|^2}{|x|^\beta} dx \geq \min_{k\in \N} A_k(n,\al,\beta)\Big(\irn \frac{|\nabla f|^2}{|x|^{2\ga}}dx\Big)^2,
\end{equation}
for any function $u \in H_{\al,\beta}^2(\R^n)$. Moreover, the constant $\min_{k\in \N} A_k(n,\al,\beta)$ in \eqref{eq:constant} is sharp and satisfies the estimate
\begin{equation}\label{eq:estimateforsharp}
\min_{k\in \N} A_k(n,\al,\beta) \geq \min_{k\in \N}\frac{1 + \min\Big\{0, \frac{4\be k}{(n+ 2k -\beta -2)^2}\Big\}}{1 + \max\Big\{0, \frac{8\ga k}{(n+ 2k -2\ga -2)^2}\Big\}} \Big(\frac{n+ 2 k + 4\al -2\ga +2}{2}\Big)^2.
\end{equation} 
\end{theorem}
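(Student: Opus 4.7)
My plan is to reduce the inequality to a sum of one-dimensional inequalities over the spherical harmonic modes, and then to combine the modes via Cauchy-Schwarz. Given $f\in C_0^\infty(\R^n\sm\{0\})$ (density will extend to $H^2_{\al,\beta}(\R^n)$), I expand
$$f(r\omega)=\sum_{k\geq 0,\,j}r^k\phi_{k,j}(r)Y_{k,j}(\omega),$$
where $\{Y_{k,j}\}_j$ is an orthonormal basis of the spherical harmonics of degree $k$ on $S^{n-1}$. Absorbing $r^k$ into the substitution is motivated by the identity $\Delta(r^kg(r)Y_k(\omega))=r^k\bigl(g''+\frac{n+2k-1}{r}g'\bigr)Y_k(\omega)$: the $k$th mode therefore behaves as a radial function in the effective dimension $m:=n+2k$. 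Together with $-\Delta_{S^{n-1}}Y_k=k(k+n-2)Y_k$ and the weights $|x|^{-2\al}$, $|x|^{-\beta}$, $|x|^{-2\ga}$, a careful integration by parts yields the three orthogonal decompositions
\[
\irn\frac{|\Delta f|^2}{|x|^{2\al}}dx=\sum_{k,j}A_{n,\al,k}(\phi_{k,j}),\quad\irn\frac{|\nabla f|^2}{|x|^\beta}dx=\sum_{k,j}B_{n,\beta,k}(\phi_{k,j}),\quad\irn\frac{|\nabla f|^2}{|x|^{2\ga}}dx=\sum_{k,j}C_{n,\ga,k}(\phi_{k,j}),
\]
where the precise coefficients $k\beta$ in \eqref{eq:Bk} and $2k\ga$ in \eqref{eq:Ck} arise from the cancellation between the cross terms in $|\nabla(r^k\phi Y_k)|^2$ and the integration by parts.

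Granting the decomposition, \eqref{eq:constant} follows from Cauchy-Schwarz. By the very definition \eqref{eq:sharpAk}, every mode satisfies $A_{n,\al,k}(\phi_{k,j})B_{n,\beta,k}(\phi_{k,j})\ge A_k(n,\al,\beta)\,C_{n,\ga,k}(\phi_{k,j})^2$. Applying Cauchy-Schwarz to the multi-indexed sum then gives
\[
\Big(\sum_{k,j}A_{n,\al,k}\Big)\Big(\sum_{k,j}B_{n,\beta,k}\Big)\ge\Big(\sum_{k,j}\sqrt{A_{n,\al,k}B_{n,\beta,k}}\Big)^2\ge\min_{k\in\N}A_k(n,\al,\beta)\,\Big(\sum_{k,j}C_{n,\ga,k}\Big)^2,
\]
which is exactly \eqref{eq:constant}. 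Sharpness of $\min_k A_k$ follows by concentrating $f$ in the mode $k_0$ that realizes the minimum and choosing its radial profile to nearly saturate the quotient in \eqref{eq:sharpAk}.

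For the quantitative bound \eqref{eq:estimateforsharp}, I fix $k\geq 0$, set $m=n+2k$, and note that $A_{n,\al,k}(\phi)=\int_0^\infty\bigl(\phi''+\frac{m-1}{r}\phi'\bigr)^2 r^{m-1-2\al}dr$, which is the weighted squared radial Laplacian in dimension $m$. Invoking Theorem \ref{Mainradial} with $t=2$ for the radial profile $\phi$ in $\R^m$ yields the core inequality
\[
A_{n,\al,k}(\phi)\cdot\int_0^\infty(\phi')^2 r^{m-1-\beta}dr\ge\Big(\tfrac{m+4\al-2\ga+2}{2}\Big)^2\Big(\int_0^\infty(\phi')^2r^{m-1-2\ga}dr\Big)^2.
\]
To pass from these pure-gradient integrals to $B_{n,\beta,k}$ and $C_{n,\ga,k}$, I use the one-dimensional Hardy inequality $\int_0^\infty\phi^2 r^{m-3-a}dr\le\frac{4}{(m-a-2)^2}\int_0^\infty(\phi')^2r^{m-1-a}dr$ (valid under the working assumption $m>a+2$) with $a=\beta$ and $a=2\ga$. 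Since the correction term in $B_{n,\beta,k}$ is negative exactly when $\beta<0$ and the one in $C_{n,\ga,k}$ is positive exactly when $\ga>0$, a case analysis produces the factors $1+\min\{0,4k\beta/(m-\beta-2)^2\}$ and $1+\max\{0,8k\ga/(m-2\ga-2)^2\}$ appearing in \eqref{eq:estimateforsharp}, and minimizing over $k$ completes the proof.

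The main obstacle is the first step: the substitution $f_{k,j}=r^k\phi_{k,j}$ must conspire with the spherical Laplacian eigenvalue $k(k+n-2)$ and the radial weights to produce precisely the coefficients displayed in \eqref{eq:Ak}--\eqref{eq:Ck} (in particular, the algebraic miracle that the undifferentiated $\phi$-term in $\Delta(r^k\phi Y_k)$ vanishes identically). Once that identity is established and the mode-wise finiteness is justified by density, the rest of the argument is a clean application of Cauchy-Schwarz and Hardy.
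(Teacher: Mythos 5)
Your proposal follows essentially the same route as the paper: decompose $f$ into spherical harmonics with the substitution $f_k = r^k g_k$, obtain the three mode-wise quantities $A_{n,\alpha,k}$, $B_{n,\beta,k}$, $C_{n,\gamma,k}$, combine the modes via the Cauchy--Schwarz inequality $(\sum A)(\sum B)\geq(\sum\sqrt{AB})^2$ (which the paper labels ``Minkowski''), and derive \eqref{eq:estimateforsharp} by pairing the radial result of Theorem \ref{Mainradial} in effective dimension $n+2k$ with the one-dimensional Hardy inequality. The only detail you leave implicit that the paper makes explicit is that \eqref{eq:estimateforsharp} forces $A_k(n,\alpha,\beta)\to\infty$ as $k\to\infty$, which is what lets one replace the infimum over $k$ by a minimum; otherwise the arguments agree.
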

The inequality \eqref{eq:estimateforsharp} was proved by Cazacu, Flynn and Lam in \cite{CFL2020} corresponding to the case $\alpha = \gamma =0, \beta =-2$ and $\alpha = \beta =0, \ga =\frac12$. It plays an important role in their proof of the sharp second uncertainty principle \eqref{eq:CFL} and  the second order hydrogen uncertainty principle \eqref{eq:CFL2}. Indeed, by considering the right-hand side as a function of $k$, they show that the infimum of the right-hand side is attained at $k=0$ in the case $\alpha = \gamma =0, \beta =-2$ for any $n\geq 2$ and in the case $\alpha = \beta =0, \ga =\frac12$ for any $n\geq 5$. This implies \eqref{eq:CFL} and \eqref{eq:CFL2}. We believe that this argument together with \eqref{eq:estimateforsharp} provides the same conclusion for general $\al, \beta, \ga$ when $n$ large enough. Nevertheless, we do not pursue this direction in this paper.

The last aim of this paper is to establish the stability estimates for the sharp second order uncertainty principle \eqref{eq:CFL} of Cazacu, Flynn and Lam. Let us define 
$$
\delta(u) =\frac{\Big(\irn |\Delta u|^2 dx\Big)^{\frac12} \Big(\irn |x|^2 |\na u|^2 dx\Big)^{\frac12}}{\frac{n+2}2 \irn |\na u|^2 dx} - 1
$$
for $u\in H_{0,-2}^2(\R^n)\setminus \{0\}$ and $\delta(0) =0$. Evidently, we always have $\delta(u) \geq 0$. Establishing the stability estimates for the sharp second order uncertainty principle \eqref{eq:CFL} means that we use $\delta(u)$ to control the distance from $u$ to the set of { extremal }functions, i.e., the set

$$
E = \Big\{ c\, e^{-a |x|^2}\, :\, c\in \R, a >0\Big\}.
$$
More precisely, we will prove the following result.

\begin{theorem}\label{Stability}
Given $n\geq 2$. Then the following inequality
\begin{equation}\label{eq:Stabestimate}
\delta(u) \geq \frac{1}{384(n+2)} \inf\Big\{\frac{\irn |\na u -\na \varphi|^2 dx}{\irn |\na u|^2 dx} \, :\, \varphi \in E, \irn |\na u|^2 dx = \irn |\na \varphi|^2 dx \Big\},
\end{equation}
holds true for any $u\in H_{0,-2}^2(\R^n)\setminus\{ 0\}$.
\end{theorem}

In recent years, there { has }been an enourmous attention on establishing the stability version of the sharp inequalities in analysis and geometry, especially the Sobolev type inequality. The question on the stability estimate for the sharp Sobolev inequality was posed by Br\'ezis and Lieb \cite{BL}. This question was affirmatively answered by Bianchi and Egnell \cite{BE} for functions in $H^1(\R^n)$ by exploiting the Hilbert structure of this space. For the case of $W^{1,p}(\R^n)$ with $p\not=2$, the stability estimates for the Sobolev inequality were established in \cite{CianchiBV,CFMPLp,FiNeu,Neu,FMP,Seuffert,FMPadv,CFW}. We also refer the readers to the papers \cite{NguyenGN,CFL14,DT,DT1,FIL,BDNS} for the stability version of the Gagliardo-Nirenberg inequality and logarithmic Sobolev inequality. { In recent paper} \cite{MV}, McCurdy and Venkatraman exploit the approach of Bianchi and Egnell \cite{BE} to prove a stability version of the classical Heisenberg-Pauli-Weyl inequality \eqref{eq:HPW}
$$\irn |\na u|^2 dx \irn |x|^2 |u|^2 dx -\frac{n^2}4 \Big(\irn |u|^2 dx\Big)^2 \geq { \tilde{C}} \inf_{v\in E} \irn |u -v|^2 dx$$
for any $u \in H^1(\R^n)$ with $\irn |u|^2 dx =1$ where $\tilde{C} >0$ in an implicit constant depending on $n$. In \cite{Fathi}, Fathi gave a new and simple proof of the above inequality by using Poincar\'e inequality for Gaussian measure. In \cite{NguyenUP}, the first author establishes the stability version of the inequality \eqref{eq:Xia} generalized the result of McCurdy and Venkatraman to a larger family of parameters.

Let us finish this introduction by some comment on the proofs of Theorem \ref{Stability}. First, we prove an improvement of \eqref{eq:CFL} for functions that are orthogonal to radial functions. More precisely, we prove that the inequality \eqref{eq:CFL} still holds true with an explicit larger constant for such functions. As an application, we show that $\delta(u)$ provides an upper bound for $\irn |\nabla u_o|^2 dx$ where $u_o(x) = (u(x) -u(-x))/2$ denotes the odd part of $u$. Hence, when $\delta(u)$ is small, the function $u$ is almost even. The second step is to establish a stability estimate of \eqref{eq:CFL} for even functions (see Lemma \ref{leven} below). In order to prove this result, we shall prove an important identity
$$\int_{\R^n} |\Delta u|^2 - (n+2) \irn |\na u|^2 dx + \irn |\na u|^2 |x|^2 dx = \irn \|\na^2 v - x \otimes \na v\|_{HS}^2 e^{-|x|^2}dx$$
with $u = v e^{-|x|^2/2}$, where $\na^2 v$ denotes the { Hessian} matrix of $v$, $\na v\otimes x$ denotes matrix $(\pa_i v x_j)_{i,j=1}^n$ and $\|A\|_{HS}$ denotes the Hilbert-Schmidt norm of an $n\times n$ matrix $A$, i.e., $\|A\|_{HS} = \Big(\text{\rm Tr}(A^tA)\Big)^{1/2}$ with $A^t$ being the transpose of $A$. Using spectral analysis of the Ornstein-Uhlenbeck type operator associated with the Gaussian measure and Hermite polynomials, we arrive at the following estimate for even function $v\in C_0^\infty(\R^n)$
$$\irn \|\na^2 v - x \otimes \na v\|_{HS}^2 e^{-|x|^2}dx \geq \frac{4n}{n+2}\left(\irn |\na ((v-c)e^{-\frac{|x|^2}{2}})|^2dx\right)$$
where
$$
c = \frac{\int_{\R^n} v e^{-|x|^2} dx}{\int_{\R^n} e^{-|x|^2} dx}.
$$
Combining the estimates for the odd and even functions, we obtain  Theorem \ref{Stability}.

The rest of this paper is organized as follows. In Section \S2, we prove the second order CKN inequalities given in Theorem \ref{Main}, Theorem \ref{Mainradial}, and Theorem \ref{MainXiatype}. Section \S3 is devoted to prove the stability version of the second order uncertainty principle of Cazacu, Flynn and Lam given in  Theorem \ref{Stability}.

\section{The second order CKN inequalities: Proof of Theorems \ref{Main}, \ref{Mainradial}, and \ref{MainXiatype}}
In this section, we provide the proof of the second order CKN inequalities in Theorems \ref{Main}, \ref{Mainradial} and  \ref{MainXiatype}. We also show that under the conditions of parameters in these theorems, the obtained inequalities are sharp and we exhibit a class of extremal functions. We begin with the proof of Theorem \ref{Main}.

\begin{proof}[Proof of Theorem \ref{Main}]
For any function $u \in C_0^\infty(\R^n)$, we have
\begin{align}\label{eq:expansionDelta1}
\irn &|\Delta u + \nabla u \cdot x |x|^{2 \alpha }|^2 |x|^{-2\alpha} dx\notag\\
&= \irn |\Delta u|^2 |x|^{-2\al} dx + |\nabla u \cdot x|^2 |x|^{2\alpha} dx + 2 \irn \Delta u \nabla u\cdot x dx.
\end{align}
Using integration by parts, we have
\begin{align*}
\irn \Delta u \nabla u\cdot x dx &= -\irn  \na^2 u (\na u) \cdot x  dx - \irn |\na u|^2dx \\
&= -\frac12 \irn \na (|\na u|^2) \cdot x  dx - \irn |\na u|^2 dx\\
&= \frac{n -2}2 \irn |\na u|^2 dx.
\end{align*}
Inserting this equality in \eqref{eq:expansionDelta1}, we get
\begin{align}\label{eq:eq1}
\irn |\Delta u + \nabla u \cdot x |x|^{2\alpha}|^2 |x|^{-2\alpha} dx&= \irn |\Delta u|^2 |x|^{-2\al} dx + \irn\lt|\nabla u \cdot \frac x{|x|}\rt|^2 |x|^{2+2\alpha} dx \notag\\
&\quad + (n -2) \irn |\na u|^2 dx.
\end{align}

We first consider the case $\alpha\not=-1$. Setting $u = v U_0(x)$ with 
$$U_0(x) = \exp\Big(-\frac{|x|^{2+2\al}}{2 + 2\al}\Big),$$
we have 
$$\na u = \na v U_0(x) + v(x) \na U_0(x),\quad \Delta u = \Delta v U_0 + 2 \na v \na U_0 + v \Delta U_0$$
and
$$\nabla U_0(x) = - x |x|^{2\alpha} e^{-\frac{|x|^{2+2\al}}{2+2\al}}, \, \Delta U_0(x) = -(n +2\al) |x|^{2\alpha} e^{-\frac{|x|^{2+2\al}}{2+2\al}} + |x|^{2 + 4\alpha} e^{-\frac{|x|^{2+2\al}}{2+2\al}}.$$
Plugging this expression into $\Delta u + \na u \cdot x |x|^{2\al}$ and using simple computations imply
$$\Delta u + \na u \cdot x |x|^{2\al} =\Big(\Delta v -\na v \cdot x|x|^{2\al}  - (n+ 2\alpha)v |x|^{2\alpha}\Big) e^{-\frac{|x|^{2+2\al}}{2+2\al}}.$$
Hence, it holds
\begin{align*}
\irn|\Delta u& + \nabla u \cdot x |x|^{2\alpha}|^2 |x|^{-2\alpha} dx\\
&= \irn |\Delta v -\na v \cdot x |x|^{2\alpha}|^2 |x|^{-2\alpha} e^{-\frac{2 |x|^{2+2\al}}{2+2\al}} dx + (n+2\al)^2 \irn v^2 |x|^{2\alpha} e^{-2\frac{|x|^{2+2\al}}{2+2\al}} dx\\
&\quad -2(n+ 2\al) \irn (\Delta v -\na v \cdot x |x|^{2\al}) v e^{-\frac{2 |x|^{2+2\al}}{2+2\al}} dx.
\end{align*}
We next compute the last integral in the right-hand side of the preceding equality. Notice that the function $v = u e^{\frac{|x|^{2+2\al}}{2+2\al}}$ is not $C^2$ at origin in general. So we can not use integration by parts directly. To overcome this difficulty, we first notice that under the assumption $u \in C_0^\infty(\R^n)$, $n+ 2\al >0$ and $n + 2+ 4\al >0$, we have
$$v \Delta v e^{-\frac{2|x|^{2+2\al}}{2+2\al}} = u(\Delta u + 2\na u \cdot x |x|^{2\alpha} + (n+2\al)|x|^{2\al} u + |x|^{2+4\al} u)\in L^1(\R^n)$$ 
and
$$v\na v\cdot x |x|^{2\al} e^{-\frac{2|x|^{2+2\al}}{2+2\al}} = u (\na u \cdot x |x|^{2\al} -u |x|^{2+ 4\alpha}) \in L^1(\R^n).$$
Therefore it holds
\begin{align*}
\irn (\Delta v -\na v \cdot x |x|^{2\al}) v e^{-\frac{2 |x|^{2+2\al}}{2+2\al}} dx &= \lim_{\ep\to 0^+} \int_{B_\ep^c}(\Delta v -\na v \cdot x |x|^{2\al}) v e^{-\frac{2 |x|^{2+2\al}}{2+2\al}} dx,
\end{align*}
where $B_\ep^c = \{x\in \R^n \, :\, |x| \geq \epsilon\}$. Using integration by parts we have
\begin{align*}
&\int_{B_\ep^c}(\Delta v -\na v \cdot x |x|^{2\al}) v e^{-\frac{2 |x|^{2+2\al}}{2+2\al}} dx \\
&= \int_{B_\ep^c}\text{\rm div}\big(\na v e^{-\frac{ |x|^{2+2\al}}{2+2\al}}\big) v e^{-\frac{ |x|^{2+2\al}}{2+2\al}} dx\\
&=-\int_{B_\ep^c}\na v e^{-\frac{ |x|^{2+2\al}}{2+2\al}} \na (v e^{-\frac{ |x|^{2+2\al}}{2+2\al}}) dx + \int_{\{|x|=\ep\}}\na v \cdot \frac x{|x|} v e^{-2\frac{\ep^{2+2\al}}{2+2\al}} ds\\
&= -\int_{B_\ep^c}|\na v|^2 e^{-\frac{ |2x|^{2+2\al}}{2+2\al}}dx + \frac12 \int_{B_\ep^c}\na v^2 \cdot x |x|^{2\al} e^{-\frac{ 2|x|^{2+2\al}}{2+2\al}}dx + \int_{\{|x|=\ep\}}\na v \cdot \frac x{|x|} v e^{-2\frac{\ep^{2+2\al}}{2+2\al}} ds  \\
&=   -\int_{B_\ep^c}|\na v|^2 e^{-\frac{ 2|x|^{2+2\al}}{2+2\al}}dx - \frac{n+2\al}2 \int_{B_\ep^c} v^2 |x|^{2\al} e^{-\frac{ 2|x|^{2+2\al}}{2+2\al}}dx + \int_{B_\ep^c} v^2 |x|^{2+ 4\al} e^{-\frac{ 2|x|^{2+2\al}}{2+2\al}}dx \\
&\qquad +\ep^{1+2\al} e^{-2\frac{\ep^{2+2\al}}{2+2\al}} \int_{\{|x|=\ep\}} v^2 ds + \int_{\{|x|=\ep\}}\na v \cdot \frac x{|x|} v e^{-2\frac{\ep^{2+2\al}}{2+2\al}} ds. 
\end{align*} 
Letting $\ep \to 0^+$ and using the assumptions $n + 2\al >0$ and $n + 2+ 4\al >0$, we obtain
\begin{align*}
\lim_{\ep \to 0^+} \int_{B_\ep^c}(\Delta v &-\na v \cdot x |x|^{2\al}) v e^{-\frac{2 |x|^{2+2\al}}{2+2\al}} dx \\
&= -\irn|\na v|^2 e^{-\frac{ 2|x|^{2+2\al}}{2+2\al}}dx - \frac{n+2\al}2 \irn v^2 |x|^{2\al} e^{-\frac{ 2|x|^{2+2\al}}{2+2\al}}dx\\
&\qquad + \irn v^2 |x|^{2+ 4\al} e^{-\frac{ 2|x|^{2+2\al}}{2+2\al}}dx.
\end{align*}
Consequently, we arrive at
\begin{align}\label{eq:toetoet}
\irn|&\Delta u + \nabla u \cdot x |x|^{2\alpha}|^2 |x|^{-2\alpha} dx\notag\\
&= \irn |\Delta v -\na v \cdot x |x|^{2\alpha}|^2 |x|^{-2\alpha} e^{-\frac{2 |x|^{2+2\al}}{2+2\al}} dx +2(n+2\al) \irn|\na v|^2 e^{-\frac{ 2|x|^{2+2\al}}{2+2\al}}dx \notag\\
&\quad +2 (n+2\al)^2 \irn v^2 |x|^{2\alpha} e^{-\frac{2|x|^{2+2\al}}{2+2\al}} dx - 2(n+2\alpha)\irn v^2 |x|^{2+ 4\al} e^{-\frac{ 2|x|^{2+2\al}}{2+2\al}}dx.
\end{align}
Again by using integration by parts on $B_\ep^c$ and letting $\ep \to 0^+$, we have
\begin{align}\label{eq:eq3}
\irn |\na u|^2 dx &= \irn |\na v|^2 U_0^2 dx + \irn v^2 |\na U_0|^2  dx + \int_{\R^n} \na v^2 \cdot U_0 \na U_0  dx \notag\\
&= \irn |\na v|^2 U_0^2dx  - \irn v^2 U_0 \Delta U_0  dx\notag\\
&=\irn |\na v|^2 e^{-2\frac{|x|^{2+ 2\al}}{2+2\al}} dx + (n+ 2\al) \irn v^2 |x|^{2\al} e^{-2\frac{|x|^{2+ 2\al}}{2+2\al}}dx\notag\\
&\quad - \irn v^2 |x|^{2 +4\al} e^{-2\frac{|x|^{2+ 2\al}}{2+2\al}} dx.
\end{align}
Combining \eqref{eq:eq1}, \eqref{eq:toetoet} and \eqref{eq:eq3}, we obtain
\begin{align*}
\irn& |\Delta u|^2 |x|^{-2\al} dx + \irn\lt|\nabla u \cdot \frac x{|x|}\rt|^2 |x|^{2+2\alpha} dx + (n -2) \irn |\na u|^2 dx\\
&=\irn |\Delta v -\na v \cdot x |x|^{2\alpha}|^2 |x|^{-2\alpha} e^{-\frac{2 |x|^{2+2\al}}{2+2\al}} dx + 2(n+2\al) \irn |\na u|^2 dx
\end{align*}
which is equivalent to
\begin{align}\label{eq:general}
\irn |\Delta u|^2& |x|^{-2\al} dx + \irn\lt|\nabla u \cdot \frac x{|x|}\rt|^2 |x|^{2+2\alpha} dx \notag\\
& = \irn |\Delta v -\na v \cdot x |x|^{2\alpha}|^2 |x|^{-2\alpha} e^{-\frac{2 |x|^{2+2\al}}{2+2\al}} dx + (n+4\al+2) \irn |\na u|^2 dx.
\end{align}
By density argument, \eqref{eq:general} still holds for any functions $u \in H_{\al,-2-2\al}^2(\R^n)$. It follows from \eqref{eq:general} that
$$\irn |\Delta u|^2 |x|^{-2\al} dx + \irn\lt|\nabla u \cdot \frac x{|x|}\rt|^2 |x|^{2+2\alpha} dx \geq (n+4\al+2) \irn |\na u|^2 dx$$
for any $u \in H_{\al, -2-2\beta}^2(\R^n)$. Replacing $u$ by function $u_\lam(x) = \lam^{\frac{n-2}2} u(\lam x)$ with $\lam >0$, we get
$$\lam^{2 + 2\al}\irn |\Delta u|^2 |x|^{-2\al} dx + \lam^{-2 -2\al} \irn\lt|\nabla u \cdot \frac x{|x|}\rt|^2 |x|^{2+2\alpha} dx \geq (n+4\al+2) \irn |\na u|^2 dx.$$
The left-hand side of the preceding inequality is minimized by 
$$\lam_0 = \lt(\frac{\irn |\Delta u|^2 |x|^{-2\al} dx}{ \irn\lt|\nabla u \cdot \frac x{|x|}\rt|^2 |x|^{2+2\alpha} dx}\rt)^{\frac1{4+4\al}}.$$
Hence, by taking $\lam =\lam_0$, we obtain \eqref{eq:CKNalpha} for $\alpha \not= -1$. The case $\alpha =-1$ follows by letting $\alpha \to -1$.

It remains to check the sharpness of \eqref{eq:CKNalpha} when $1 + \al >0$. Taking $u = U_0$ implies $v\equiv 1$. Hence, \eqref{eq:general} becomes
$$\irn |\Delta U_0|^2 |x|^{-2\al} dx + \irn\lt|\nabla U_0\cdot \frac x{|x|}\rt|^2 |x|^{2+2\alpha} dx = (n+4\al+2) \irn |\na U_0|^2 dx.$$
Furthermore, by the direct computations and integration by parts, we have
\begin{align*}
\irn |\Delta U_0|^2 |x|^{-2\al} dx& = (n+2\al)^2 \irn |x|^{2\al} U_0^2dx -2(n+2\al)\irn |x|^{2+ 4\al} U_0^2dx \\
&\qquad + \irn |x|^{4+ 6\al} U_0^2 dx\\
&=(n+2\al)^2 \irn |x|^{2\al} U_0^2 dx  + (n+2\al) \irn \na U_0^2 \cdot x |x|^{2\al} dx \\
&\qquad + \irn\lt|\nabla U_0\cdot \frac x{|x|}\rt|^2 |x|^{2+2\alpha} dx\\
&=\irn\lt|\nabla U_0\cdot \frac x{|x|}\rt|^2 |x|^{2+2\alpha} dx.
\end{align*}
This implies that the equality occurs in \eqref{eq:CKNalpha} with $u= U_0$. Hence, the inequality \eqref{eq:CKNalpha} is sharp and $U_0$ is an extremal function. This completes the proof of Theorem \ref{Main}.
\end{proof}

We next prove Theorem \ref{Mainradial}.
\begin{proof}[Proof of Theorem \ref{Mainradial}]
By density argument, it is enough to prove \eqref{eq:2CKN} for radial functions $u \in C_0^\infty(\R^n)$. Let $u \in C^\infty_0(\R^n)$ be a radial function, by using integration by parts, we have
\begin{align*}
\int_{\R^n} \frac{|\na u|^t}{|x|^{t\ga}} dx &= |S^{n-1}| \int_0^\infty |u'|^t r^{n-t\ga -1}dr\\
&= \frac{1}{n -t\ga} |S^{n-1}| \int_0^\infty |u'|^t (r^{n-t\ga})' dr\\
&= -\frac t{n-t\ga} |S^{n-1}| \int_0^\infty |u'|^{t-2} u' u'' r^{n-t\ga} dr\\
&= -\frac t{n-t\ga} |S^{n-1}| \int_0^\infty |u'|^{t-2} u'\Big(u'' + \frac{n-1}{r} u' - \frac{n+2\al}r u'\Big) r^{n-t\ga} dr\\
&\quad -\frac{t(1+2\al)}{n-t\ga} |S^{n-1}| \int_0^\infty |u'|^t r^{n-t\ga -1}dr.
\end{align*}
This gives
\begin{align}\label{eq:new}
 &\frac{n+t(1+ 2\al -\ga)}t\int_{\R^n} \frac{|\na u|^t}{|x|^{t\ga}} dx \notag\\
&\qquad\qquad= -|S^{n-1}| \int_0^\infty |u'|^{t-2} u'\Big(u'' + \frac{n-1}{r} u' - \frac{n+2\al}r u'\Big) r^{n-2\ga} dr\notag\\
&\qquad\qquad= -|S^{n-1}| \int_0^\infty |u'|^{t-2} u' r^{-\frac\be 2} \Big(u'' + \frac{n-1}{r} u' - \frac{n+2\al}r u'\Big) r^{-\alpha} r^{n-1} dr,
\end{align}
here we use \eqref{eq:balance}. By density argument, \eqref{eq:new} still holds for radial function $u \in H_{\al,\beta}^2(\R^n)$. Using H\"older inequality, we arrive at 
\begin{align}\label{eq:trunggian}
& \left|\frac{n+t(1+ 2\al -\ga)}t\right|\int_{\R^n} \frac{|\na u|^t}{|x|^{t\ga}} dx \notag\\
&\leq \lt(|S^{n-1}| \int_0^\infty \Big(u'' + \frac{n-1}{r} u' - \frac{n+2\al}r u'\Big)^2r^{n-2\al -1}dr\rt)^{\frac12}\notag\\
&\qquad \times \lt(|S^{n-1}| \int_0^\infty |u'|^{2(t-1)} r^{n-\beta-1}dr\rt)^{\frac12} \notag\\
&= \lt(|S^{n-1}| \int_0^\infty \Big(u'' + \frac{n-1}{r} u' - \frac{n+2\al}r u'\Big)^2r^{n-2\al -1}dr\rt)^{\frac12}\lt(\irn \frac{|\na u|^{2(t-1)}}{|x|^\be} dx\rt)^{\frac12} .
\end{align}
Furthermore, using integration by parts, we have
\begin{align*}
&\int_0^\infty \Big(u'' + \frac{n-1}{r} u' - \frac{n+2\al}r u'\Big)^2r^{n-2\al -1}dr\\
&= \int_0^\infty (\Delta u(r))^2 r^{n-2\al -1} dr + (n+2\al)^2 \int_0^\infty (u')^2 r^{n-2\al -3} dr\\
&\quad -2(n+2\alpha) \int_0^\infty\Big( u'' + \frac{n-1}{r} u'\Big) \frac{u'}r r^{n-2\al -1} dr\\
&= \int_0^\infty (\Delta u(r))^2 r^{n-2\al -1} dr - (n+2\al)(n-2\al -2)\int_0^\infty (u')^2 r^{n-2\al -3} dr\\
&\quad -(n+2\al) \int_0^\infty ((u')^2)' r^{n-2\al -2} dr\\
&= \int_0^\infty (\Delta u(r))^2 r^{n-2\al -1} dr.
\end{align*}
Consequently, it holds
$$|S^{n-1}| \int_0^\infty \Big(u'' + \frac{n-1}{r} u' - \frac{n+2\al}r u'\Big)^2r^{n-2\al -1}dr = \irn \frac{(\Delta u)^2}{|x|^{2\al}}dx.$$
Inserting this equality into \eqref{eq:trunggian}, we obtain \eqref{eq:2CKN}.

Suppose that a nonzero radial function $u\in H_{\al,\beta}^2(\R^n)$ is an extremal function for \eqref{eq:2CKN}. Notice that under the conditions \eqref{eq:welldefineu} and \eqref{eq:Deltau}, we have
$$n+t(1+2\al -\ga) = n+ 2\al + (1+2\al)(t-2) + 1 + \al -\frac\beta 2 >0.$$
Hence, the equation holds when applying the H\"older inequality to \eqref{eq:new} if and only if  
$$u'' + \frac{n-1}{r} u' - \frac{n+2\al}r u' = -\lambda |u'|^{t-2} u' r^{\al -\frac \beta2}$$
for some $\lambda >0$, which is equivalent to
$$u'' -\frac{1+2\al}{r} u' + \lambda |u'|^{t-2} u' r^{\al -\frac \beta2} = 0.$$
Denote $u' = r^{1+ 2\al} w$, then $w$ satisfies the equation
$$w' + \lam r^{(1+2\al)(t-2) + \al -\frac \beta2}|w|^{t-2} w =0.$$
We have following two cases:

{\bf Case 1: $t=2$.} In this case, we have $w' + \lam r^{\al -\frac\be 2} w =0$ which implies $w(r) = c \exp(-\lam r^{1+ \al -\frac\be2}/(1 + \al -\frac\be 2))$ for some $c \in \R$. Hence, 
$$u'(r) = c r^{1 + 2\al} \exp\Big(-\lam \frac{r^{1 + \al -\be/2}}{1 + \al -\be/2}\Big)$$
and
$$u(x) = c \int_{|x|}^\infty r^{1 + 2\al} \exp\Big(-\lam\frac{r^{1 + \al -\be/2}}{1 + \al -\be/2}\Big)dr.$$

{\bf Case 2: $t > 2$.} In this case, we have $(|w|^{2-t})' = \lam(t-2)\lam r^{(1+2\al)(t-2) + \al -\frac \beta2}$ which implies
$$|w(r)| = \Big(c + \lam (t-2)\frac{r^{(1+2\al)(t-2) + 1 + \al -\frac\be2}}{(1+2\al)(t-2) + 1 + \al -\frac\be2}\Big)^{\frac1{2-t}},$$
for some $c >0$, here we use \eqref{eq:welldefineu}. From this expression, up to a multiplicative constant $1$ or $-1$, we can assume that
$$w(r) = \Big(c + \lam (t-2)\frac{r^{(1+2\al)(t-2) + 1 + \al -\frac\be2}}{(1+2\al)(t-2) + 1 + \al -\frac\be2}\Big)^{\frac1{2-t}}.$$
Therefore, the extremal function has the form
$$
u(x) = \int_{|x|}^\infty r^{1+ 2\al} \Big(c + \lam (t-2)\frac{r^{(1+2\al)(t-2) + 1 + \al -\frac\be2}}{(1+2\al)(t-2) + 1 + \al -\frac\be2}\Big)^{\frac1{2-t}} dr
$$
as desired.
\end{proof}

We finish this section by proving Theorem \ref{MainXiatype}. Our proof follows the approach of Cazacu, Flynn and Lam to prove \eqref{eq:CFL} by using the decomposition of $u$ into spherical harmonics. It is well known that the technique of decomposing a function into spherical harmonics is a very useful method to prove the Hardy-Rellich type inequalities (see \cite{Ter,Ghou,Caz,NguyenProcA1,VZ} and references therein). Let us recall some facts on this method which we borrow from \cite[Section $2.2$]{Ter}. A function $f \in C_0^\infty(\R^n)$ can be decomposed into spherical harmonics as
\begin{equation}\label{eq:Spher}
f(x) = \sum_{k=0}^\infty f_k(r) \phi_k(\omega),\quad x = r \omega, |x| =r, \omega \in S^{n-1}
\end{equation}
where $\phi_k$ are the orthogonal eigenfunctions of the Laplace-Beltrami operator on $S^{n-1}$ with the corresponding eigenvalue $c_k =k(n+k-2)$. Notice that $\phi_0\equiv 1$, $\phi_k$ is restriction of the $k$ order homogeneous, harmonic polynomials in $\R^n$ to $S^{n-1}$ and $f_k(r) =\frac{1}{|S^{n-1}|}\int_{S^{n-1}} f \phi_k ds$ with $k\geq 0$. Hence $f_k \in C_0^\infty(\R^n)$ with $f_k(r) = O(r^k)$ as $r\to 0$.

We have
$$\Delta f(x) = \sum_{k=0}^\infty \Big(f_k''(r) + \frac{n-1}r f_k'(r) -c_k \frac{f_k(r)}{r^2}\Big)\phi_k(\omega)$$
and
$$|\na f(x)|^2 = \sum_{k=0}^\infty \Big(|\nabla f_k|^2 \phi_k^2 + \frac{f_k^2}{r^2} |\na_{S^{n-1}} \phi_k|^2\Big).$$
Following Cazacu, Flynn and Lam, we set $f_k(r) = r^k g_k(r)$. By the simple computations, we have
\begin{align*}
f_k''(r) &+ \frac{n-1}r f_k'(r) -c_k \frac{f_k(r)}{r^2}\\
& = r^k g_k'' + 2k r^{k-1} g_k' + k(k-1) r^{k-2} g_k + (n-1)r^{k-1} g_k' + k(n-1) r^{k-2} g_k -c_k r^{k-2} g_k\\
&=r^k g_k'' + (n + 2k -1) r^{k-1} g_k'
\end{align*}
and 
$$|\na f_k|^2 = k^2 r^{2(k-1)} g_k^2 + r^{2k} (g_k')^2 + 2k r^{2k-1} g_k g_k'.$$
Therefore, using integration by parts and the definitions \eqref{eq:Ak}, \eqref{eq:Bk} and \eqref{eq:Ck}, we get
\begin{align}\label{eq:Deltaalpha}
\int_{\R^n} \frac{|\Delta f|^2}{|x|^{2\al}} dx &= \sum_{k=0}^\infty \int_0^\infty \Big(f_k''(r) + \frac{n-1}r f_k'(r) -c_k \frac{f_k(r)}{r^2}\Big)^2 r^{n-2\al-1} dr\notag\\
&=\sum_{k=0}^\infty \int_0^\infty \Big(g_k''(r) + \frac{n+2k-1}r g_k'(r)\Big)^2 r^{n-2\al+ 2k- 1} dr\notag\\
&= \sum_{k=0}^\infty \Big(\int_0^\infty (g_k'')^2 r^{n+2k -2\al -1} dr + (n+2k-1)^2 \int_0^\infty (g'_k)^2 r^{n +2k -2\al -3} dr\notag\\
&\qquad\qquad\qquad \qquad + 2(n+2k-1) \int_0^\infty g_k g_k' r^{n+2k -2\al -2} dr\Big)\notag\\
&= \sum_{k=0}^\infty \Big(\int_0^\infty (g_k'')^2 r^{n+2k -2\al -1} dr \notag\\
&\qquad\qquad\qquad\quad \quad\quad + (1 + 2\al) (n+2k-1) \int_0^\infty (g'_k)^2 r^{n +2k -2\al -3} dr\Big)\notag\\
&= \sum_{k =0}^\infty A_{n,\al,k}(g_k),
\end{align}
\begin{align}\label{eq:nablabeta}
\irn \frac{|\nabla f|^2}{|x|^\beta}dx &= \sum_{k=0}^\infty  \int_0^\infty \Big((f_k')^2 + c_k \frac{f_k^2}{r^2}\Big) r^{n-\beta -1} dr\notag\\
&=  \sum_{k=0}^\infty \Big(\int_0^\infty (g_k')^2 r^{n+2k -\beta -1}dr + 2k\int_0^\infty g_k g_k' r^{n+2k -\beta -2} dr \notag\\
&\qquad\qquad\qquad\quad + (c_k + k^2) \int_0^\infty g_k^2 r^{n+2k -\beta -3} dr\notag\\
&= \sum_{k=0}^\infty \Big( \int_0^\infty (g_k')^2 r^{n+2k -\beta -1}dr + \beta k \int_0^\infty g_k^2 r^{n+2k -\beta -3} dr\Big)\notag\\
&= \sum_{k=0}^\infty B_{n,\beta,k}(g_k)
\end{align}
and
\begin{align}\label{eq:nablagamma}
\irn \frac{|\nabla f|^2}{|x|^{2\ga}}dx &=  \sum_{k=0}^\infty \Big( \int_0^\infty (g_k')^2 r^{n+2k -2\ga -1}dr + 2\gamma k \int_0^\infty g_k^2 r^{n+2k -2\ga -3} dr\Big)\notag\\
& =\sum_{k=0}^\infty C_{n,\ga,k}(g_k).
\end{align}
With these preparations, we are ready to prove Theorem \ref{MainXiatype}

\begin{proof}[Proof of Theorem \ref{MainXiatype}]
It follows from \eqref{eq:Deltaalpha}, \eqref{eq:nablabeta} and \eqref{eq:nablagamma} that 
$$\Big(\irn \frac{|\Delta f|^2}{|x|^{2\al}} dx\Big)\Big(\irn \frac{|\na f|^2}{|x|^\beta} dx\Big) = |S^{n-1}|^2 \Big(\sum_{k=0}^\infty A_{n,\al,k}(g_k)\Big) \Big(\sum_{k=0}^\infty B_{n,\be,k}(g_k)\Big)$$
and 
$$
\irn \frac{|\nabla f|^2}{|x|^{2\ga}}dx = |S^{n-1}|\sum_{k=0}^\infty C_{n,\ga,k}(g_k).
$$
By Minkowski inequality and the definition \eqref{eq:sharpAk} of $A_k(n,\al,\beta)$, we have
\begin{align*}
\Big(\sum_{k=0}^\infty  A_{n,\al,k}(g_k)\Big) \Big(\sum_{k=0}^\infty B_{n,\be,k}(g_k)\Big) &\geq \Big(\sum_{k=0}^\infty \sqrt{A_{n,\al,k}(g_k) B_{n,\be,k}(g_k)}\Big)^2 \\
&\geq \inf_{k\in \N} A_k(n,\al,\beta) \Big(\sum_{k=0}^\infty C_{n,\ga,k}(g_k)\Big)^2,
\end{align*}
which yields
$$
\Big(\irn \frac{|\Delta f|^2}{|x|^{2\al}} dx\Big)\Big(\irn \frac{|\na f|^2}{|x|^\beta} dx\Big) \geq \inf_{k\in \N} A_k(n,\al,\beta)\Big(\irn \frac{|\nabla f|^2}{|x|^{2\ga}}dx\Big)^2.
$$

Furthermore, by using one dimensional Hardy inequality, we have
$$\int_0^\infty (g')^2 r^{n+2k -\beta -1}dr \geq \Big(\frac{n+2k -\beta -2}2\Big)^2 \int_0^\infty g^2 r^{n+2k -\beta -3} dr,$$
and 
$$ \int_0^\infty (g')^2 r^{n+2k -2\ga -1}dr \geq\Big(\frac{n+2k -2\ga -2} 2\Big)^2 \int_0^\infty g^2 r^{n+2k -2\ga -3}dr$$
which imply
$$B_{n,\be,k}(g) \geq \Big(1 + \min\Big\{0, \frac{4\be k}{(n+ 2k -\beta -2)^2}\Big\}\Big) \int_0^\infty (g')^2 r^{n+2k -\beta -1}dr$$
and 
$$C_{n,\gamma,k}(g) \leq \Big(1 + \max\Big\{0, \frac{8\ga k}{(n+ 2k -2\ga -2)^2}\Big\}\Big) \int_0^\infty (g')^2 r^{n+2k -2\ga -1}dr.$$
Notice that
$$1 + \min\Big\{0, \frac{4\be k}{(n+ 2k -\beta -2)^2}\Big\} >0$$
and
$$A_{n,\al,k}(g) = \int_0^\infty \Big(g''(r) + \frac{n+2k-1}r g'(r)\Big)^2 r^{n+2k -2\al -1} dr.$$
Hence, applying the inequality \eqref{eq:2CKN} for radial functions in dimension $n+2k$ we get 
\begin{equation}\label{eq:Akestimate}
A_k(n,\al,\beta) \geq \frac{1 + \min\Big\{0, \frac{4\be k}{(n+ 2k -\beta -2)^2}\Big\}}{1 + \max\Big\{0, \frac{8\ga k}{(n+ 2k -2\ga -2)^2}\Big\}} \Big(\frac{n+ 2 k + 4\al -2\ga +2}{2}\Big)^2.
\end{equation}
This gives
$$\lim_{k\to \infty} A_k(n,\al,\beta) = \infty$$
and hence
$$\inf_k A_k(n,\al,\beta) = \min_k A_k(n,\al,\beta).$$
Consequently, we get
$$\Big(\irn \frac{|\Delta f|^2}{|x|^{2\al}} dx\Big)\Big(\irn \frac{|\na f|^2}{|x|^\beta} dx\Big) \geq \min_{k} A_k(n,\al,\beta)\Big(\irn \frac{|\nabla f|^2}{|x|^{2\ga}}dx\Big)^2$$
as wanted \eqref{eq:constant}.

It is easy to see that the constant $\min_k A_k(n,\al,\beta)$ is sharp in \eqref{eq:constant}. Indeed, there exists $k_0$ and a sequence of function $h_k$ such that $\min_k A_k(n,\al,\beta) = A_{k_0}(n,\al,\beta)$ and
$$\lim_{k\to \infty} \frac{A_{n,\al,k_0}(h_k)B_{n,\beta,k_0}(h_k)}{C_{n,\ga,k_0}(h_k)^2} = A_{k_0}(n,\al,\beta).$$
Testing \eqref{eq:constant} by functions $h_k \phi_{k_0}$ implies the sharpness of $\min_k A_k(n,\al,\be)$.

Finally, the estimate \eqref{eq:estimateforsharp} immediately follows from \eqref{eq:Akestimate}.


\end{proof}

\section{The proof of Theorem \ref{Stability}}
In this section, we prove the stability version of the second order uncertainty principle \eqref{eq:CFL} due to Cazacu, Flynn and Lam given in Theorem \ref{Stability}. We divided the proof into two parts. In the first part, we show that $\de(u)$ gives an upper bound for the odd part of the function $u$, hereafter for a function $u$ we call the function $u_o(x) = (u(x) - u(-x))/2$ as its odd part. This task is done by establishing an improvement of \eqref{eq:CFL} on the odd functions (even more general, on functions that are orthogonal to all radial functions). Consequently, when $\de(u)$ is small, $u$ is almost an even function. In the second part, we prove the stability estimate \eqref{eq:Stabestimate} for even functions. This is done by using spectral analysis of the Orstein-Uhlenbeck operators on the Gaussian space. Combining these estimates, we prove theorem \ref{Stability}.  

\subsection{Estimate for the odd functions}
We start this subsection by proving an improvement of \eqref{eq:CFL} for compactly supported smooth functions that are orthogonal to all radial functions. A function $u$ is called to be orthogonal to all radial functions if 
\[
\int_{\R^n} u(x) \varphi(x) dx = 0
\]
for any radial function $\varphi$. More precisely, we will prove the following result.
\begin{theorem}\label{improvedCFL}
Given $n \geq 2$. For any function $u\in C_0^\infty(\R^n)$ that is orthogonal to all radial functions, it holds
\begin{equation}\label{eq:improvedCFL}
\Big(\irn |\Delta u|^2 dx\Big)^{\frac12} \Big( \irn |x|^2 |\na u|^2 dx\Big)^{\frac 12} \geq C_1(n) \irn |\na u|^2 dx
\end{equation}
where
\[
C_1(n) = \frac{n+ 4}2 \Big(1 - \frac 8{(n+2)^2}\Big)^{\frac12}.
\]
\end{theorem} 
\begin{proof}
We follows the proof of \eqref{eq:CFL} given in \cite{CFL2020} by using the spherical harmonic decomposition \eqref{eq:Spher} (see also the proof of Theorem \ref{MainXiatype}). Since $u$ is orthogonal to all radial function, then 
\[
u(x) = \sum_{k =1}^\infty u_k(r) \phi_k(\omega), \quad r =|x|,\, x = r \omega.
\]
Let $u_k(r) = r^k v_k(r)$, then we have from \eqref{eq:Deltaalpha}, \eqref{eq:nablabeta} and \eqref{eq:nablagamma} that
\[
\int_{\R^n} (\Delta u)^2 dx = \sum_{k =1}^\infty A_{n,0,k}(v_k) ,
\]
\[
\int_{\R^n} |x|^2 |\na u|^2 dx = \sum_{k =1}^\infty B_{n,-2,k}(v_k),
\]
and
\[
\int_{\R^n}|\na u|^2 dx = \sum_{k =1}^\infty C_{n,0,k}(v_k).
\]
It was proved in \cite[Proof of Theorem $2.1$, Case $N\geq 2$]{CFL2020} that
\begin{align*}
A_{n,0,k}(v_k) B_{n,-2,k}(v_k) & =\Big(\int_0^\infty r^{n+2k-1} (v_k''(r))^2 dr + (n+ 2k -1) \int_0^\infty r^{n+ 2k -3} (v_k'(r))^2 dr\Big)\\
&\quad\quad\quad \quad \times \Big(\int_0^\infty r^{n+2k+1} (v_k'(r))^2 dr -2k \int_0^\infty r^{n+ 2k -1} v_k(r)^2 dr\Big)\\
& \geq \Big(1 - \frac{8k}{(n+ 2k)^2} \Big) \frac{(n+ 2k +2)^2}4 \int_0^\infty r^{n+2k-1} (v_k'(r))^2 dr\\
&= \Big(1 - \frac{8k}{(n+ 2k)^2} \Big) \frac{(n+ 2k +2)^2}4 C_{n,0,k}(v_k)^2.
\end{align*}
Using Cauchy-Schwarz inequality, we have
\begin{align*}
\Big(\irn |\Delta u|^2 dx\Big)^{\frac12} \Big( \irn |x|^2 |\na u|^2 dx\Big)^{\frac 12} &= \Big(\sum_{k=1}^\infty A_{n,0,k}(v_k)\Big)^{\frac12} \Big(\sum_{k=1}^\infty B_{n,-2,k}(v_k)\Big)^{\frac12}\\
&\geq \sum_{k=1}^\infty \sqrt{A_{n,0,k}(v_k) B_{n,-2,k}(v_k)} \\
&\geq \inf_{k\geq 1} \Big(1 - \frac{8k}{(n+ 2k)^2} \Big)^{\frac12} \frac{n+ 2k +2}2 \sum_{k=1}^\infty C_{n,0,k}(v_k) \\
&= \inf_{k\geq 1} \Big(1 - \frac{8k}{(n+ 2k)^2} \Big)^{\frac12} \frac{n+ 2k +2}2 \int_{\R^n} |\na u|^2 dx. 
\end{align*}
Since $n\geq 2$, it was show in the proof of Lemma $3.3$ in \cite{CFL2020} that
\[
\inf_{k\geq 1} \Big(1 - \frac{8k}{(n+ 2k)^2} \Big)^{\frac12} \frac{n+ 2k +2}2 = \frac{n+ 4}2 \Big(1 - \frac 8{(n+2)^2}\Big)^{\frac12}.
\]
This completes the proof of this theorem.
\end{proof}

It is easy to see that $\sqrt{1 -2 t^2} \geq 1 - (2 -\sqrt{2}) t$ for any $0 \leq t \leq \frac 12$, and hence
\[
(1 + t) \sqrt{1 -2 t^2} \geq (1 + t) (1 - (2 -\sqrt{2}) t) = 1 + t( \sqrt 2 -1 - (2 -\sqrt 2) t) \geq 1 + \frac t{10}
\]
for any $0\leq t \leq \frac 12$. Using this estimate, we get
\begin{equation}\label{eq:estConst}
C_1(n) = \frac{n+ 4}2 \Big(1 - \frac 8{(n+2)^2}\Big)^{\frac12} \geq \frac{n+2}2 + \frac 1{10}.
\end{equation}
Since $C_1(n) > (n+2)/2$, then the inequality \eqref{eq:improvedCFL} provides an improvement of \eqref{eq:CFL} for functions that are orthogonal to radial functions.

Let $u\in C_0^\infty(\R^n)$ be a function such that $\int_{\R^n} |\na u|^2 dx = 1$, we define $u_o(x) = (u(x) - u(-x))/ 2$ and $u_e(x) = (u(x) + u(-x))/2$ the odd part and even part of $u$ respectively. Note that $u_o$ is an odd function, $u_e$ is an even function and $u = u_o + u_e$. It is easy to check that
\[
\int_{\R^n} (\Delta u)^2 dx = \int_{\R^n} (\Delta u_o)^2 dx + \int_{\R^n} (\Delta u_e)^2 dx,
\]
\[
\int_{\R^n} |\nabla u|^2 |x|^2 dx = \int_{\R^n} |\nabla u_o|^2 |x|^2 dx + \int_{\R^n} |\nabla u_e|^2 |x|^2 dx
\]
and
\[
1 = \int_{\R^n} |\nabla u|^2 dx = \int_{\R^n} |\nabla u_o|^2  dx + \int_{\R^n} |\nabla u_e|^2  dx.
\]
Using the Cauchy-Schwarz inequality, we have
\begin{align}\label{eq:estodd}
\delta (u) &:=\frac2{n+2} \lt(\Big(\irn |\Delta u|^2 dx\Big)^{\frac12} \Big(\irn |x|^2 |\na u|^2 dx\Big)^{\frac12} - \frac {n+2}2\rt)\notag\\
&=\frac2{n+2} \Bigg(\Big(\irn |\Delta u_o|^2 dx + \irn |\Delta u_e|^2 dx\Big)^{\frac12} \Big(\irn |x|^2 |\na u_o|^2 dx + \irn |x|^2 |\na u_o|^2 dx\Big)^{\frac12} \notag\\
&\qquad\qquad \qquad - \frac {n+2}2 \irn  |\na u_o|^2 dx - \frac{n+2}2 \irn |\na u_e|^2 dx\Bigg)\notag\\
&\geq \frac2{n+2} \Bigg(\Big(\irn |\Delta u_o|^2 dx\Big)^{\frac12} \Big(\irn |x|^2 |\na u_o|^2 dx\Big)^{\frac12} + \Big(\irn |\Delta u_e|^2 dx\Big)^{\frac12} \Big(\irn |x|^2 |\na u_e|^2 dx\Big)^{\frac12}\notag\\
&\qquad\qquad\qquad - \frac {n+2}2 \irn  |\na u_o|^2 dx - \frac{n+2}2 \irn |\na u_e|^2 dx\Bigg)\notag\\
&\geq \frac{1}{5(n+2)} \int_{\R^n} |\na u_o|^2 dx + \delta(u_e) \int_{\R^n} |\na u_e|^2 dx.
\end{align}
Here we used \eqref{eq:estConst} for the last inequality. As consequences of \eqref{eq:estodd}, we have  the following results

\begin{theorem}\label{oddfunct}
Given $n \geq 2$. For any function $u \in C_0^\infty(\R^n)$ with $\int_{\R^n} |\na u|^2 dx =1$ and $\delta(u) \leq \frac{1}{10 (n+2)}$, it holds
\begin{equation}\label{eq:oddpart}
\int_{\R^n} |\na u_o|^2 dx \leq 5(n+2) \delta(u),
\end{equation}
and
\begin{equation}\label{eq:evenpart}
\delta(u_e) \leq 2 \delta(u).
\end{equation}
\end{theorem}
\begin{proof}
Since $\delta(u_e) \geq 0$, we have from \eqref{eq:estodd}
\[
\delta(u) \geq \frac1{5(n+2)} \int_{\R^n} |\na u_o|^2 dx.
\]
This proves \eqref{eq:oddpart}. We have
\[
\int_{\R^n} |\na u_e|^2 dx = 1 - \int_{\R^n} |\na u_o|^2 dx \geq 1 - 5(n+2) \de(u) \geq \frac12
\]
here we use the assumption $\de(u) \leq \frac1{10(n+2)}$. Using again \eqref{eq:estodd}, we get
\[
\delta(u) \geq \frac12 \delta(u_e).
\]
This proves \eqref{eq:evenpart}.
\end{proof}

\subsection{Estimate for the even functions}
We start this subsection by establishing an identity that provides an alternative proof of the second order uncertainty principle of Cazacu, Flynn and Lam.

\begin{lemma}\label{identity}
For any function $u\in C_0^\infty(\R^n)$, it holds
\begin{equation}\label{eq:crucial}
\irn |\Delta u|^2 dx + \irn |x|^2 u^2 dx = (n+2) \irn |\na u|^2 dx + \irn \|\na^2 v - x \otimes \na v\|_{HS}^2 e^{-|x|^2} dx,
\end{equation}
where $u = v e^{-\frac{|x|^2}2}$.
\end{lemma}

\begin{proof}
Firstly, by using integration by parts we can easily check that
\begin{align}\label{eq:Hessian1}
\irn \|\na^2 u + \na u \otimes x\|_{HS}^2 dx & = \int_{\R^n} \|\na^2 u\|_{HS}^2 dx + 2 \int_{\R^n} \na^2 u (\na u) \cdot x dx + \irn |\na u|^2 |x|^2 dx\notag\\
&= \int_{\R^n} |\Delta u|^2 + \irn \na(|\na u|^2) \cdot x dx + \irn |\na u|^2 |x|^2 dx\notag\\
&= \int_{\R^n} |\Delta u|^2 - n \irn |\na u|^2 dx + \irn |\na u|^2 |x|^2 dx.
\end{align}
Next, we set $u = v e^{-|x|^2/2}$. Then, $v\in C_0^\infty(\R^n)$ and we have
$$\na^2 u = [\na^2 v -\na v \otimes x - x \otimes \na v - v(I_n -x \otimes x)] e^{-\frac{|x|^2}2}$$
and
$$\na u \otimes x = (\na v \otimes x - x \otimes x v) e^{-\frac{|x|^2}2}.$$
Consequently, it holds
$$\na^2 u + \na u \otimes x = [\na^2 v - x \otimes \na v - v I_n] e^{-\frac{|x|^2}2}$$
which yields
\begin{equation}\label{eq:newa}
\|\na^2 u + \na u \otimes x\|_{HS}^2 = \|\na^2 v - x \otimes \na v\|_{HS}^2 e^{-|x|^2} -2(\Delta v - \na v\cdot x)v e^{-|x|^2} + n v^2 e^{-|x|^2}.
\end{equation}
By using again integration by parts, we get
\begin{align*}
\int_{\R^n} (\Delta v - \na v \cdot x) v e^{-|x|^2} dx &= -\irn |\na v|^2 e^{-|x|^2} dx + \irn \na v \cdot x v e^{-|x|^2} dx \\
&= -\irn |\na v|^2 e^{-|x|^2} dx + \frac12 \irn \na v^2 \cdot x e^{-|x|^2} dx\\
&=-\irn |\na v|^2 e^{-|x|^2} dx -\frac n2 \irn v^2 e^{-|x|^2} dx + \irn v^2 |x|^2 e^{-|x|^2} dx.
\end{align*}
Integrating both sides of \eqref{eq:newa} in $\R^n$ and using the preceding equality, we arrive at
\begin{align}\label{eq:Hessian2}
\irn \|\na^2 u + \na u \otimes x\|_{HS}^2 dx &= \irn \|\na^2 v - x \otimes \na v\|_{HS}^2 e^{-|x|^2}dx + 2\irn |\na v|^2 e^{-|x|^2} dx \notag\\
&\qquad +2n \irn v^2 e^{-|x|^2} dx -2 \irn v^2 |x|^2 e^{-|x|^2} dx.
\end{align}
Finally, by using integration by parts again, we have
\begin{align}\label{eq:Hessian3}
\irn |\na u|^2 dx & =\irn |\na v - x v|^2 e^{-|x|^2}dx\notag\\
&= \irn |\na v|^2 e^{-|x|^2} dx - 2 \irn \na v \cdot x v e^{-|x|^2} dx + \irn |x|^2 v^2 e^{-|x|^2} dx \notag\\
&= \irn |\na v|^2 e^{-|x|^2} dx -\irn \na v^2 \cdot x e^{-|x|^2} dx + \irn |x|^2 v^2 e^{-|x|^2} dx\notag\\
&= \irn |\na v|^2 e^{-|x|^2} dx + n \irn v^2 e^{-|x|^2} dx - \irn |x|^2 v^2 e^{-|x|^2} dx.
\end{align}
The identity \eqref{eq:crucial} is now followed from \eqref{eq:Hessian1}, \eqref{eq:Hessian2} and \eqref{eq:Hessian3}.
\end{proof}

As a remark, we show that \eqref{eq:crucial} provides another proof of \eqref{eq:CFL}. Indeed, from \eqref{eq:crucial} we have
\begin{equation}\label{eq:crucial*}
\irn |\Delta u|^2 dx + \irn |x|^2 u^2 dx \geq (n+2) \irn |\na u|^2 dx,
\end{equation}
for any function $u\in C_0^\infty(\R^n)$. Applying \eqref{eq:crucial*} for function $u_\lam(x) = \lam^{n/2-1} u(\lam x)$ with $\lam >0$, we get
$$\lam^2 \irn |\Delta u|^2 dx + \lam^{-2} \irn |x|^2 u^2 dx \geq (n+2) \irn |\na u|^2 dx$$
for any $\lam >0$. Choosing $\lam = (\irn |\Delta u|^2 dx/ \irn |x|^2 u^2 dx)^{\frac14}$ implies \eqref{eq:CFL}.

Our next task is to use \eqref{eq:crucial} to prove a stability estimate for \eqref{eq:CFL} on even functions. We first prove the following result

\begin{theorem}\label{evenstab}
Let $v\in C_0^\infty(\R^n)$ be an even function. Then it holds
\begin{equation}\label{eq:remainder}
			\irn \|\na^2 v - x \otimes \na v\|_{HS}^2 e^{-|x|^2} dx\geq \frac{4n}{n+2}\left(\irn |\na ((v-c)e^{-\frac{|x|^2}{2}})|^2dx\right),
		\end{equation}
		where
		\[
		c= \frac{\int_{\R^n} v e^{-|x|^2} dx}{\int_{\R^n} e^{-|x|^2} dx}.
		\]
\end{theorem}
Before proving Theorem \ref{evenstab}, we provide some preparations.
Note that
\[
\na[(v-c) e^{-\frac{|x|^2}2}] = [\na(v- c) -x(v-c) ]e^{-\frac{|x|^2}2},
\]
then by using integration by parts, we get
\begin{multline*}
\irn |\na ((v-c)e^{-\frac{|x|^2}{2}})|^2dx = \irn |\na(v-c)|^2 e^{-|x|^2}dx + n\irn (v-c)^2 e^{-|x|^2}dx \\- \irn (v-c)^2 |x|^2 e^{-|x|^2}dx.
\end{multline*}
Notice that by making the change of variable $x = y/\sqrt 2$ and making the change of function $w(y) = v(y/\sqrt 2)$, then the inequality \eqref{eq:remainder} is equivalent to
\begin{align}\label{eq:remainder1}
			&\irn \|2\na^2 w - y \otimes \na w\|_{HS}^2 d\gamma_n\notag\\
&\quad \geq \frac{2n}{n+2}\left(4\irn |\na(w-c)|^2d\gamma_n+2n\irn (w-c)^2d\gamma_n-\irn |y|^2(w-c)^2d\gamma_n\right),
		\end{align}
where $\gamma_n$ denotes the standard Gaussian measure on $\R^n$, i.e, 
\[
d \gamma_n(y) = \frac{e^{-\frac{|y|^2}2}}{(2 \pi)^{\frac n2}} dy.
\]
The proof of \eqref{eq:remainder1} is based on the spectral analysis of the Ornstein-Uhlenbeck operator on the Gaussian space. 
Let $L_n$ denote the Ornstein-Uhlenbeck operator on the Gaussian space, i.e., $L_n w(y) = \Delta w(y) - \na w(y) \cdot y$. It holds that
\begin{equation}\label{eq:IBP}
\irn L_n w_1\cdot w_2 d\ga_n = -\irn \na w_1 \cdot \na w_2 d\ga_n.
\end{equation}
It is well-known that $L^2(\gamma_1)$ has an orthogonal basis consisting of the Hermite polynomials $H_0, H_1, H_2, \ldots$ with 
$$H_i(t) = (-1)^i e^{t^2/2} \frac{d^i}{dt^i} (e^{-t^2/2}),\qquad i=0,1,2,\ldots.$$
Note that $H_i$ is a polynomial of degree  $i$, $L_1 H_i = -i H_i$ and $\int_{\R} H_i^2 d\gamma_1 = i!$. For examples, $H_0\equiv 1, H_1(t) = t, H_2(t) = t^2 -1, H_3(t) = t^3 -3t,\ldots$. The Hermite polynomials have the following properties (see \cite[formulas $22.7.14$ and $22.8.8$]{Abra})
\begin{equation*}
H_{i+1}(t) =t H_i(t) - iH_{i-1}(t),\quad { i\geq 1},
\end{equation*}
and
\begin{equation*}
H_i'(t) = i H_{i-1}(t),\quad i\geq 1.
\end{equation*}
These properties imply the following results,
\begin{proposition}\label{nhanx2}
There holds
$$t^2 H_0(t) = H_2(t) + H_0(t),\, t^2 H_1(t) = H_3(t) + 3H_1(t)$$
and
$$ t^2 H_i(t) = H_{i+2}(t) + (2i +1) H_i(t) + i(i-1) H_{i-2}(t), \quad i\geq 2.$$
\end{proposition}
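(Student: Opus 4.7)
The plan is to apply the three-term recurrence $H_{i+1}(t) = tH_i(t) - iH_{i-1}(t)$ (stated just before the proposition) twice in succession. Rewriting it as $tH_i(t) = H_{i+1}(t) + iH_{i-1}(t)$ for $i \geq 1$, together with the trivial identity $tH_0(t) = t = H_1(t)$, gives a one-step multiplication rule for $t$ that I can iterate.

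For the case $i = 0$, I apply the rule once to get $tH_0 = H_1$, then again to get $t^2 H_0 = tH_1 = H_2 + H_0$, which is the first identity. For $i = 1$, I write $t^2 H_1 = t(H_2 + H_0) = tH_2 + tH_0 = (H_3 + 2H_1) + H_1 = H_3 + 3H_1$, which is the second identity.

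For $i \geq 2$, both applications fall under the $i \geq 1$ branch of the recurrence. I first write $tH_i = H_{i+1} + iH_{i-1}$, then multiply by $t$ again and apply the recurrence to each of $tH_{i+1}$ and $tH_{i-1}$, which are both legitimate since the indices are at least $1$. This yields $t^2 H_i = (H_{i+2} + (i+1)H_i) + i(H_i + (i-1)H_{i-2})$, so collecting terms produces the third identity in the form $H_{i+2} + (2i+1)H_i + i(i-1)H_{i-2}$.

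There is no genuine obstacle: the entire argument is a two-step application of the recurrence, with the only bookkeeping being to track the two slightly special base cases $i = 0, 1$ (where $H_{i-2}$ and the branch $i \geq 1$ of the recurrence are not both available) separately from the generic case $i \geq 2$. The differentiation formula $H_i'(t) = iH_{i-1}(t)$ stated in the paper is not needed here — only the three-term recurrence is used.
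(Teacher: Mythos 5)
Your two-step iteration of the three-term recurrence is exactly the intended argument (the paper itself offers no proof, just "Using these two properties, we get the following result"), and your algebra is correct. But you should flag the discrepancy you quietly produced: for $i\geq 2$ you correctly obtain
\[
t^2 H_i(t) = H_{i+2}(t) + (2i+1)H_i(t) + i(i-1)H_{i-2}(t),
\]
whereas the paper's statement has $(i-1)H_{i-2}(t)$, missing a factor of $i$. Your coefficient is the right one — check $i=2$: $t^2H_2 = t^4-t^2$, while $H_4+5H_2+2H_0 = (t^4-6t^2+3)+5(t^2-1)+2 = t^4-t^2$, but the paper's $H_4+5H_2+H_0 = t^4-t^2-1$ is off by a constant. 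So your derivation actually exposes a typo in the proposition; say so rather than silently writing a formula that contradicts the claimed statement. (This error propagates to the subsequent Lemma on $\int t^2 H_iH_j\,d\gamma_1$, whose entries also need $i!$-type normalization factors, but that is beyond the statement you were asked to prove.)
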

Using Proposition \ref{nhanx2}, we can easily to show for any $i\leq j$ that
\begin{equation}\label{eq:integralHix2}
\int_{\R} t^2 H_i(t) H_j(t) d\ga_1(t) =
\begin{cases}
0&\mbox{if $j =i+1$ or $j > i+2$}\\
(2i+1) i! &\mbox{if $i =j$}\\
(i+2)! &\mbox{if $j = i+2$.}
\end{cases}
\end{equation}
For each $I \in \Z_+^n$, $I = (i_1,\ldots,i_n), i_k \geq 0, k =1,\ldots,n$, we denote 
$$H_I(y) = H_{i_1}(y_1)\cdots H_{i_n}(y_n).$$
So,  $\{H_I\}_{I\in \Z_+^n}$ forms an orthogonal basis of $L^2(\gamma_n)$ and 
$$L_n H_I = -|I| H_I,\quad \text{\rm with} \quad |I| = i_1 + i_2 + \cdots + i_n.$$
Notice that $\int_{\R^n} H_I^2 d\gamma_n = i_1! i_2! \cdots i_n!=: I!$ for $I = (i_1,i_2,\ldots i_n) \in \Z_n^+$. For $I, J \in \Z^n_+$, we say that $I$ is a neighborhood of $J$ (denote by $I \sim J$) if and only if there exists uniquely a $1\leq k\leq n$ such that $|j_k -i_k| =2$ and $j_l = i_l$ for any $l\not =k$. We say that $I \leq J$ if $i_k \leq j_k$ for any $k=1,2,\ldots,n$. We shall use \eqref{eq:integralHix2} to prove the next lemma.
\begin{lemma}\label{HIJ2}
For any $I, J\in \Z_n^+$, it holds,
\begin{equation}\label{eq:HI2}
\int_{\R^n} H_I(y) H_J(y) |y|^2 d\gamma_n(y) = \begin{cases}
(2|I| + n) I ! &\mbox{if $I = J$}\\
J!&\mbox{if $I\leq J$ and $I\sim J$}\\
I! &\mbox{if $J \leq I$ and $J \sim I$}\\
0 &\mbox{otherwise.}
\end{cases}
\end{equation}
\end{lemma}
\begin{proof}
We first consider the case $I =J$. Using \eqref{eq:integralHix2}, we have
		\begin{align*}
			\irn H_I^2|y|^2d\ga_n&=\sum_{k=1}^n\irn H_I^2y_k^2d\ga_n	=\sum_{k=1}^n\left(\int_{\R} H_{i_k}^2y_k^2d\ga_1\right)\prod_{l\not=k}\int_{\R}H_{i_j}^2d\ga_1\\
			&	=\sum_{k=1}^n(2i_k+1)i_k!\prod_{l\not=k}i_l!=(2|I|+n)I!.
		\end{align*}
		
We next consider the case $I \leq J$ and $I \sim J$. So there is an index $l$ such that $j_l = i_l + 2$ and $j_k = i_k$ for any $k \not =l$. Using \eqref{eq:integralHix2} and $\int_{\R} H_{i_l} H_{i_l +2} d\ga_1 = 0$, we have
\begin{align*}
				\irn H_IH_J|y|^2d\ga_n&=\sum_{p=1}^n\irn H_{i_l}(y_l)H_{i_l+2}(y_l)\lt(\prod_{k\not=l}H_{i_k}^2\rt)y_p^2d\ga_n\\
			&=\int_{\R} H_{i_l}(y_l)H_{i_l+2}(y_l) y_l^2d\ga_1\prod_{k\not=l}\int_{\R} H_{i_k}^2d\ga_1\\
&\qquad +\sum_{p\not=l}\left(\int_{\R} H_{i_l}H_{i_l+2}d\ga_1\right)\left(\int_{\R^{n-1}} \left(\prod_{k\not=l}H_{i_k}^2\right)y_p^2d\ga_{n-1}\right)\\
			&=(i_l+2)!\prod_{k\not=l}i_k!=J!
\end{align*}

The case $J \leq I$ and $J\sim$ is treated similarly.

Finally, we consider the case $I \not = J$ and $I \not\sim J$. In this case, there are two subcases:
		
		$\bullet$	If there exist at least two indices $i_k\not=j_k$ and $i_l\not=j_l$ ($k\not=l$), then 
		\begin{align*}
			\irn H_IH_J|y|^2d\ga_n=\sum_{p=1}^n\irn H_IH_Jy_p^2d\ga_n=0
		\end{align*}
		since for each $p$, in the decomposition of $\irn H_IH_Jy_p^2d\ga_n$,  there is always at least one factor of the form $\int_{\R}H_{i_l}H_{j_l}d\ga_1=0$ or $\int_{\R}H_{i_k}H_{j_k}d\ga_1=0$.
		
		$\bullet$ If there is only one index $i_k\not=j_k$, then $|i_k-j_k|\not=2$ since $I\not\sim J$. Using \eqref{eq:integralHix2}, we have $\int_{\R} H_{i_k}H_{j_k}y_k^2d\ga_1=0$. Remark that $\int_{\R} H_{i_k}H_{j_k}d\ga_1=0$. From these facts, we get
		\begin{align*}
				\irn H_IH_J|y|^2d\ga_n&=\sum_{p=1}^n\irn H_IH_Jy_p^2d\ga_n\\
			&=\int_{\R} H_{i_k}H_{j_k}y_k^2d\ga_1\left(\prod_{l\not=k}\int_{\R} H_{i_l}^2d\ga_1\right)\\
&\qquad +\sum_{p\not=k}\left(\int_{\R} H_{i_k}H_{j_k}d\ga_1\right)\left(\int_{\R^{n-1}} \left(\prod_{l\not=k}H_{i_l}^2\right)y_p^2d\ga_{n-1}\right)\\
&=0.
		\end{align*}
		
		
\end{proof}

Let  $\{e_1,e_2,...,e_n\}$ be the canonical basis of $\R^n$. The next lemma is an application of Fubini theorem for sums.
\begin{lemma}\label{Fubini}
Let $\{c_{I}\}$ be a nonnegative sequence on $\Z_n^+$ and $f, g$ be two nonnegative functions on $\Z$. Then we have
\[
\sum_{|I|\geq 2} f(|I|) \sum_{l=1}^n c_{I+2e_l} g(i_l +2) = \sum_{|I|\geq 4} c_I f(|I|-2) \sum_{k: i_k\geq 2} g(i_k).
\]
\end{lemma}
\begin{proof}
Note that for each $J \geq I$ and $J\sim I$, there exists a unique $l \in \{1,2,\ldots, n\}$ such that $J = I + 2e_l$. Hence, it hols
\[
\sum_{l=1}^n c_{I+2e_l} g(i_l +2) = \sum_{J}\sum_{h=1}^n c_J \chi_{\{K: K\geq I, K\sim I\}}(J) \chi_{\{k: j_k -i_k =2\}}(h) g(j_h)
\]
here, $\chi_A$ denotes the characteristic function of the set $A$. Using Fubini theorem, we have
\begin{align*}
\sum_{|I|\geq 2} f(|I|) \sum_{l=1}^n c_{I+2e_l} g(i_l +2) &= \sum_{|I|\geq 2} \sum_{J}\sum_{h=1}^n f(|I|) c_J \chi_{\{K: K\geq I, K\sim I\}}(J) \chi_{\{k: j_k -i_k =2\}}(h) f(j_h)\\
&=\sum_{J} \sum_{|I|\geq 2} \sum_{h=1}^n f(|I|) c_J \chi_{\{K: K\geq I, K\sim I\}}(J) \chi_{\{k: j_k -i_k =2\}}(h) f(j_h)\\
&= \sum_{J} \sum_{|I|\geq 2, I \leq J, I\sim J} \sum_{h=1}^n f(|I|) c_J \chi_{\{k: j_k -i_k =2\}}(h) f(j_h)\\
&=\sum_{|J|\geq 4} f(|J|-2) c_J \sum_{|I|\geq 2, I \leq J, I\sim J}\sum_{h=1}^n \chi_{\{k: j_k -i_k =2\}}(h) f(j_h),
\end{align*}
here we use the fact that if $I\leq J$ and $I\sim J$ then $|I| = |J| -2$. Note that for a fixed $J$ with $|J| \geq 4$, if there is an index $l$ such that $j_l \geq 2$ then $I = (j_1, \ldots, j_l -2, \ldots, j_n)$ satisfies $|I| \geq 2, I \leq J$ and $I \sim J$. Thus, we have
\[
\sum_{|I|\geq 2, I \leq J, I\sim J}\sum_{h=1}^n \chi_{\{k: j_k -i_k =2\}}(h) f(j_h) = \sum_{h: j_h \geq 2} f(j_h).
\]
Inserting this equality in the preceding one, we complete the proof of this lemma.
\end{proof}
We are now ready to prove Theorem \ref{evenstab}
\begin{proof}[Proof of Theorem \ref{evenstab}]
As mentioned before, it is enough to prove \eqref{eq:remainder1}. We remark that
\[
c = \frac{\int_{\R^n} v e^{-|x|^2} dx}{\int_{\R^n} e^{-|x|^2} dx} = \int_{\R^n} w(y) d\gamma_n(y).
\]
We decompose $w$ in the basis $\{H_I\}_{I \in \Z_n^+}$ as 
$$w = \sum_{I\in \Z_+^n} a_I  H_I,$$
with 
\[
a_I = \frac1{I!} \int_{\R^n} w(y) H_I(y) d\ga_n(y).
\]
Notice that $c = a_{(0,\ldots,0)}$ and $a_I =0$ for any $I$ with $|I| =1$ since $w$ is even.

We first compute 
\[
4\irn |\na(w-c)|^2d\gamma_n+2n\irn (w-c)^2d\gamma_n-\irn |y|^2(w-c)^2d\gamma_n.
\]
Denote $\bar w = w -c$, we have
\[
\bar w = \sum_{I : |I|\geq 2} a_I H_I.
\]
This implies that 
		\begin{align}\label{e060120224}
			\begin{split}
				\irn |\na\bar w|^2d\ga_n&=\sum_{|I|\geq 2}|I| b_I^2,\\
				\irn \bar w ^2 d\ga_n &= \sum_{|I| \geq 2} b_I^2,\\
			\end{split}
		\end{align}
here we denote $b_I = a_I \sqrt{I!}$. Using Lemma \ref{HIJ2}, we have
\begin{align}\label{e060120227}
		\irn \bar w^2|y|^2d\ga_n & =\sum_{|I|\geq 2}a_I^2\irn H_I^2|y|^2d\ga_n+\sum_{I\not= J, |I|, |J|\geq 2}a_I a_J\irn H_IH_J|y|^2d\ga_n\notag\\
				&=\sum_{|I|\geq 2}(2|I|+n)b_I^2+2\sum_{|I|\geq 2}a_I\sum_{J \geq I, J\sim I}a_JJ!.
		\end{align}

From \eqref{e060120224} and \eqref{e060120227} and Lemma \ref{Fubini}, we obtain 
		\begin{align}\label{e070120221}
		&	4\irn |\na\bar w|^2d\gamma_n+2n\irn \bar w^2d\gamma_n-\irn |y|^2\bar w^2d\gamma_n\notag\\
&=\sum_{|I|\geq 2}(2|I|+n)b_I^2-2\sum_{|I|\geq 2}a_I\sum_{J\sim I, J\geq I}a_JJ!\notag\\
			&=\sum_{|I|\geq 2}(2|I|+n)b_I^2-2\sum_{|I|\geq 2}b_I\sum_{l=1}^nb_{I+2e_l}\sqrt{(i_l+1)(i_l+2)}\notag\\
			&=\sum_{|I|\geq 2}\sum_{l=1}^n\left((i_l+1)b_I^2-2b_I\sqrt{i_l+1}b_{I+2e_l}\sqrt{i_l+2}+b_{I+2e_l}^2(i_l+2)\right)\notag\\
			&+\sum_{|I|\geq 2}|I|b_I^2-\sum_{|I|\geq 2}\sum_{l=1}^nb_{I+2e_l}^2(i_l+2)\notag\\
			&=\sum_{|I|\geq 2}\sum_{l=1}^n\left(\sqrt{i_l+1}b_I -\sqrt{i_l+2}b_{I+2e_l}\rt)^2 + \sum_{|I|\geq 2}|I|b_I^2-\sum_{|I|\geq 4}b_I^2\sum_{k, i_k\geq 2}i_k\notag\\
			&=\sum_{|I|\geq 2}\sum_{l=1}^n\left(\sqrt{i_l+1}b_I -\sqrt{i_l+2}b_{I+2e_l}\rt)^2 + \sum_{|I|\geq 2}|I|b_I^2-\sum_{|I|\geq 4}b_I^2\lt(|I| -\sum_{k, i_k< 2}i_k\rt)\notag\\
			&= \sum_{|I|\geq 2}\sum_{l=1}^n\left(\sqrt{i_l+1}b_I -\sqrt{i_l+2}b_{I+2e_l}\rt)^2 + \sum_{2\leq |I|\leq 3}|I| b_I^2  +\sum_{|I|\geq 4}b_I^2\sum_{k, i_k< 2}i_k.
		\end{align}	

We next compute the left hand side of \eqref{eq:remainder1}. Using \eqref{eq:IBP} and the commutator relation $L_n\partial_j f = \partial_j L_n f +(\partial_j f)^2$ we have
		\begin{align*}
			\irn \|2\na^2 w &- y \otimes \na w\|_{HS}^2 d\gamma_n\\
&=\irn \|2\na^2 \bar w - y \otimes \na \bar w\|_{HS}^2 d\gamma_n\\
			&=4 \sum_{j=1}^n \irn |\na \partial_j \bar w|^2 d\gamma_n -4\irn \na^2 \bar w (\na \bar w) \cdot y d\gamma_n + \irn |y|^2 |\na \bar w|^2 d\ga_n\\
&=-4\sum_{j=1}^n\irn L(\partial_j \bar w)\partial_j\bar wd\ga_n -2\irn \na(|\na \bar w|^2) \cdot y d\ga_n +\irn |\na \bar w|^2|y|^2d\ga_n\\
			&=\sum_{j=1}^n\left(-4\irn \partial_j L_n\bar w\partial_j\bar wd\ga_n-4\irn (\partial_j\bar w)^2d\ga_n\right) +2n\irn |\na \bar w|^2d\ga_n\\
&\qquad\qquad\qquad -\irn |y|^2|\na \bar w|^2d\ga_n\\
			&=-4\irn \nabla L_n\bar w\cdot\nabla \bar wd\ga_n-4\irn |\na \bar w|^2d\ga_n +2n\irn |\na \bar w|^2d\ga_n-\irn |y|^2|\na \bar w|^2d\ga_n\\
			&= 4\irn (L_n \bar w)^2 d\ga_n + 2(n-2) \irn |\na \bar w|^2 d\ga_n - \irn |y|^2|\na w|^2d\ga_n.
		\end{align*}

		An elementary computation gives 
		\begin{equation*}
			\irn |y|^2|\na \bar w|^2d\ga_n=-\irn \bar wL_n \bar w|y|^2d\ga_n+n\irn \bar w^2d\ga_n-\irn |y|^2\bar w^2d\ga_n.
		\end{equation*}
		Substituting this into previous equality, we arrive at 
		\begin{align}\label{e0601202233}
			\irn \|2\na^2  w - y \otimes \na  w\|_{HS}^2 d\gamma_n&=4\irn (L_n\bar w)^2d\ga_n+2(n-2)\irn |\na \bar w|^2d\ga_n+\irn \bar wL_n \bar w|y|^2d\ga_n\notag\\
			&-n\irn \bar w^2d\ga_n+\irn |y|^2\bar w^2d\ga_n.
		\end{align}
It holds that
\[
L_n \bar w = \sum_{I: |I|\geq 2} -a_I |I| H_I.
\]
This implies
\begin{equation}\label{eq:Lnw2}
\irn (L_n \bar w)^2 d\ga_n = \sum_{|I|\geq 2} |I|^2 b_I^2.
\end{equation}
Using again Lemma \ref{HIJ2}, we have
\begin{align}\label{eq:Lnw}
\irn \bar w(-L_n \bar w)|y|^2d\ga_n&=\sum_{|I|,|J|\geq 2}a_I|J|a_J\irn H_IH_J|y|^2d\ga_n\notag\\
&=\sum_{|I|\geq 2} a_I^2 |I| \irn H_I^2 |y|^2 d\ga_n + \sum_{I\not=J} a_I a_J |J| \irn H_I H_J |y|^2 d\ga_n\notag\\
&=\sum_{|I|\geq 2} |I|(2|I| + n) b_I^2 + \sum_{|I|\geq2}a_I \sum_{J\geq I, J\sim I} a_J |J| J! + \sum_{|J|\geq 2} a_J |J|\sum_{I \geq J, I \sim J} a_I I!\notag\\
&= \sum_{|I|\geq 2} |I|(2|I| + n) b_I^2 + 2\sum_{|I|\geq2}a_I (|I| +1) \sum_{J\geq I, J\sim I} a_J J.,
\end{align}

Plugging \eqref{eq:Lnw2}, \eqref{eq:Lnw} and \eqref{e060120227} into \eqref{e0601202233}, we get
		\begin{align*}
			&\irn \|2\na^2 w - y \otimes \na w\|_{HS}^2 d\gamma_n\\
&=4\sum_{|I|\geq 2}|I|^2 b_I^2+2(n-2) \sum_{|I|\geq 2}|I| b_I^2 -\sum_{|I|\geq 2}|I|(2|I|+n)b_I^2-2\sum_{|I|\geq 2}a_I(|I|+1)\sum_{J\sim I, J\geq I}a_JJ!\notag\\
			&-n\sum_{|I|\geq 2}b_I^2+\sum_{|I|\geq 2}(2|I|+n)b_I^2 +2\sum_{|I|\geq 2}a_I\sum_{J\sim I, J \geq I}a_JJ!\\
			&= \sum_{|I|\geq 2} |I| (2|I|+ n-2) b_I^2 - 2\sum_{|I|\geq 2}a_I|I|\sum_{J\geq I, J\sim I}a_JJ!.
		\end{align*}
Note that 
\[
\sum_{|I|\geq 2}a_I|I|\sum_{J\geq I, J\sim I}a_JJ! = \sum_{|I|\geq 2}b_I |I| \sum_{l=1}^n b_{I + 2e_l} \sqrt{(i_l +1)(i_l + 2)}.
\]
Using Lemma \ref{Fubini}, we obtain
\begin{align}\label{eq:compare1}
			&\irn \|2\na^2 w - y \otimes \na w\|_{HS}^2 d\gamma_n\notag\\
&=\sum_{|I|\geq 2}|I|(2|I|+n-2)b_I^2 -2\sum_{|I|\geq 2}b_I|I|\sum_{l=1}^nb_{I+2e_l}\sqrt{(i_l+1)(i_l+2)}\notag\\
			&=\sum_{|I|\geq 2}|I|\sum_{l=1}^n\left(\sqrt{i_l+1}b_I-\sqrt{i_l+2}b_{I+2e_l}\right)^2+\sum_{|I|\geq 2}|I|(|I|-2)b_I^2\notag\\
&\qquad\qquad-\sum_{|I|\geq 2}|I|\sum_{l=1}^nb_{I+2e_l}^2(i_l+2)\notag\\
			&=\sum_{|I|\geq 2}|I|\sum_{l=1}^n\left(\sqrt{i_l+1}b_I-\sqrt{i_l+2}b_{I+2e_l}\right)^2+\sum_{|I|\geq 2}|I|(|I|-2)b_I^2 \notag\\
&\qquad\qquad -\sum_{|I|\geq 4}b_I^2 (|I|-2) \sum_{k: i_k\geq 2} i_k\notag\\
&=\sum_{|I|\geq 2}|I|\sum_{l=1}^n\left(\sqrt{i_l+1}b_I-\sqrt{i_l+2}b_{I+2e_l}\right)^2+\sum_{|I|\geq 2}|I|(|I|-2)b_I^2 \notag\\
&\qquad\qquad -\sum_{|I|\geq 4}b_I^2 (|I|-2)(|I| -\sum_{k: i_k <2} i_k)\notag\\
&= \sum_{|I|\geq 2}|I|\sum_{l=1}^n\left(\sqrt{i_l+1}b_I-\sqrt{i_l+2}b_{I+2e_l}\right)^2+ 3\sum_{|I|= 2}b_I^2  + \sum_{|I|\geq 4}b_I^2 (|I|-2)\sum_{k: i_k <2} i_k\notag\\
&\geq 2\sum_{|I|\geq 2}\sum_{l=1}^n\left(\sqrt{i_l+1}b_I-\sqrt{i_l+2}b_{I+2e_l}\right)^2  + 2\sum_{|I|\geq 4}b_I^2\sum_{k: i_k <2} i_k
		\end{align}	

On the other hand, we have
\begin{align}\label{e080120224}
			\irn \|2\na^2 w& - y \otimes \na w\|_{HS}^2 d\gamma_n\notag\\
			&=\sum_{|I|\geq 2}|I|(2|I|+n-2)b_I^2 -2\sum_{|I|\geq 2}b_I|I|\sum_{l=1}^nb_{I+2e_l}\sqrt{(i_l+1)(i_l+2)}\notag\\
			&=\sum_{|I|\geq 2}\left(|I|b_I-\sum_{l=1}^nb_{I+2e_l}\sqrt{(i_l+1)(i_l+2)}\right)^2+\sum_{|I|\geq 2}|I|(|I|+n-2)b_I^2\notag\\
			&-\sum_{|I|\geq 2}\left(\sum_{l=1}^nb_{I+2e_l}\sqrt{(i_l+1)(i_l+2)}\right)^2.
		\end{align}
Using the Cauchy-Schwarz inequality, we get
\begin{align*}
			\left(\sum_{l=1}^nb_{I+2e_l}\sqrt{(i_l+1)(i_l+2)}\right)^2&\leq \left(\sum_{l=1}^n(i_l+1)\right)\left(\sum_{l=1}^nb_{I+2e_l}^2(i_l+2)\right)\\
			&=(|I|+n)\sum_{l=1}^nb_{I+2e_l}^2(i_l+2).
		\end{align*}
		This and Lemma \ref{Fubini} imply that 
		\begin{align}\label{e080120225}
			\sum_{|I|\geq 2}	\left(\sum_{l=1}^nb_{I+2e_l}\sqrt{(i_l+1)(i_l+2)}\right)^2&\leq \sum_{|I|\geq 2}(|I|+n)\sum_{l=1}^nb_{I+2e_l}^2(i_l+2)\notag\\
			&= \sum_{|J|\geq 4} b_J^2 (|J| + n -2) \sum_{j_k\geq 2} j_k\notag\\
			&=\sum_{|J|\geq 4}	b_J^2(|J|+n-2)\left(|J|-\sum_{j_k<2}j_k\right).
		\end{align}
		Substituting \eqref{e080120225} into \eqref{e080120224}, we obtain
		\begin{align}\label{eq:compare2}
			\irn \|2\na^2 w - y \otimes \na w\|_{HS}^2 d\gamma_n&\geq \sum_{|I|\geq 2}\left(|I|b_I-\sum_{l=1}^nb_{I+2e_l}\sqrt{(i_l+1)(i_l+2)}\right)^2\notag\\
			&\qquad +\sum_{2\leq |I|\leq 3}|I|(|I|+n-2)b_I^2+\sum_{|J|\geq 4}b_J^2\left(\sum_{j_m<2}j_m\right)\notag\\
&\geq n\sum_{ 2\leq |I|\leq 3}|I|b_I^2.
		\end{align}		
The desired estimate \eqref{eq:remainder1} follows from \eqref{eq:compare1}, \eqref{eq:compare2} and \eqref{e070120221}.
\end{proof}

The next lemma given a stability estimate of \eqref{eq:CFL} for even functions.
\begin{lemma}\label{leven}
		Let $u\in C_0^\infty(\R^n)$ be a non-zero even function, then 
		\begin{equation}\label{e080120229}
			\delta(u)\geq \frac{4n}{(n+2)^2}\inf_{\varphi\in E}\left\{\frac{\irn |\nabla u-\nabla\varphi|^2dx}{\irn |\nabla u|^2dx}\right\}.
		\end{equation}
	\end{lemma}
	\begin{proof}[Proof of Lemma \ref{leven}]
	Since both sides of the inequality \eqref{e080120229} are invariant under the dilation and multiplication by a non-zero constant, then without loss of generality we can assume that  $\irn |\na u|^2 dx = 1$ and 
		$$\irn |\Delta u|^2 dx = \irn |x|^2 |\na u|^2 dx.$$
		Using the identity \eqref{eq:crucial}, we have 
		\begin{align*}
			\delta(u)& = \frac{\irn |\De u|^2 dx + \irn |x|^2 |\na u|^2 dx-(n+2)\irn |\na u|^2 dx}{n+2} \\
			& =\frac{1}{n+2}\irn \|\na^2 v - x \otimes \na v\|_{HS}^2 e^{-|x|^2}dx
		\end{align*}
		with $u=ve^{-\frac{|x|^2}{2}}$. Note that $v$ is also even. Let $c=\irn vd\mu_n$. Applying Theorem \ref{evenstab}, we obtain
		\begin{align*}
		\delta(u) \geq \frac{4 n}{(n+2)^2} \irn |\nabla [(v-c)e^{-\frac{|x|^2}2}]|^2 dx& = \frac{4 n}{(n+2)^2} \irn|\na u - \na (ce^{-\frac{|x|^2}2})|^2 dx\\
		&\geq  \frac{4 n}{(n+2)^2} \inf_{\varphi \in E} \irn |\na u -\na \varphi|^2 dx.
		\end{align*}

	\end{proof}


\subsection{Proof of Theorem \ref{Stability}}
In this subsection, we given the proof of Theorem \ref{Stability} by using the results from two preceding subsections.
\begin{proof}[Proof of Theorem \ref{Stability}]
By the density argument, it is enough to prove Theorem \ref{Stability} for function $u\in C_0^\infty(\R^n)$. Without loss of generality, we can assume that $\irn |\na u|^2 dx =1$. Let 
\[
u_o(x) = \frac{u(x) -u(-x)}2,\quad u_e(x) = \frac{u(x) + u(-x)}2
\]
be the odd part and even part of $u$ respectively.

We first consider the case $\delta(u) \leq \frac1{96(n+2)}$. It follows from Theorem \ref{oddfunct} that
\[
\irn |\na u_o|^2 dx \leq 5(n+2) \delta(u)
\]
and $\delta(u_e) \leq 2 \delta(u)$. Applying Lemma \ref{leven} for $u_e$, we have
\[
\inf_{\varphi \in E} \irn |\na u_e - \na \varphi|^2 dx \leq \frac{(n+2)^2}{4n} \de(u_e) \irn |\na u_e|^2 dx \leq \frac{(n+2)^2}{2n} \de(u) \leq (n+2) \de(u),
\] 
since $n\geq 2$. Using the inequality
\[
\irn |\na u - \na \varphi|^2 dx \leq 2\lt(\irn |\na u_o|^2 dx + \irn |\na u_e -\na \varphi|^2 dx\rt),
\]
we obtain
\[
\inf_{\varphi \in E} \irn |\na u - \na \varphi|^2 dx \leq 12(n+2) \de(u).
\]
From the previous estimate, we get
\[
\inf_{v \in E} \irn |\na u -\na \varphi|^2 dx \leq \frac18.
\]
We now show that the infimum above is attained by a non-zero function $\varphi\in E$. Indeed, let $\varphi_i = c_i e^{-\lambda_i |x|^2/2}$ be a sequence in $E$ such that
$$\lim_{i\to \infty} \irn |\na u -\na \varphi_i|^2 dx = \inf_{\varphi \in E} \irn |\na u -\na \varphi|^2 dx \leq \frac18.$$
Using triangle inequality, we have
$$\Big(\irn |\na u -\na \varphi_i|^2 dx\Big)^{\frac12} \geq \Big| \Big(\irn |\na \varphi_i|^2 dx\Big)^{\frac12} -1\Big|.$$
Thus for $i$ large enough, we get
$$\frac14 \leq \irn |\na \varphi_i|^2 dx \leq \frac 94.$$
A simple computation gives
$$\irn |\na \varphi_i|^2 dx = c_i^2 \lam_i^{-\frac n2 + 1} \irn |x|^2 e^{-|x|^2} dx.$$
So there are $a, A >0$ such that $a^2 \leq c_i^2 \lam_i^{-\frac n2 + 1}\leq A^2$ for $i$ large enough. We also note that
\begin{equation}\label{eq:expan}
\irn |\na u -\na \varphi_i|^2 dx = 1 + c_i^2 \lam_i^{-\frac n2 + 1}\irn |x|^2 e^{-|x|^2} dx + 2c_i \lam_i^{\frac12} \irn \na u \cdot (\lam_i^{\frac12} x) e^{-|\lam_i^{\frac12} x|^2/2} dx.
\end{equation}
We next claim that $\lam_i$ is bounded from above and below by positive constants. Indeed, suppose, up to extract a subsequence, that
$$\lim_{i\to \infty} \lam_i = \infty.$$
For any $\epsilon >0$, there exists $R>0$ small enough such that $\int_{B_R} |\na u|^2 dx < \ep^2$. Hence, it holds
\begin{align*}
\Big|\irn \na u \cdot (\lam_i^{\frac12} x) e^{-|\lam_i^{\frac12} x|^2/2} dx\Big|&\leq \int_{B_R} |\na u| |\lam_i^{\frac12} x| e^{-|\lam_i^{\frac12} x|^2/2} dx + \int_{B_R^c} |\na u|^2 |\lam_i^{\frac12} x| e^{-|\lam_i^{\frac12} x|^2/2} dx\\
&\leq \Big(\int_{B_R^c} |\na u|^2 dx\Big)^{\frac12} \Big(\int_{B_R^c}|\lam_i^{\frac12} x|^2 e^{-|\lam_i^{\frac12} x|^2} dx\Big)^{\frac12}\\
&\qquad + \ep \Big(\int_{B_R}|\lam_i^{\frac12} x|^2 e^{-|\lam_i^{\frac12} x|^2} dx\Big)^{\frac12}\\
&\leq \lam_i^{-\frac n4}\Big(\int_{B_{R\sqrt{\lam_i}}^c}|x|^2 e^{-|x|^2} dx\Big)^{\frac12} + \lam_i^{-\frac n4} \ep  \Big(\int_{B_{R\sqrt{\lam_i}}}|x|^2 e^{-|x|^2} dx\Big)^{\frac12}
\end{align*}
which implies
$$
\Big|c_i \lam_i^{\frac 12}\irn \na u \cdot (\lam_i^{\frac12} x) e^{-|\lam_i^{\frac12} x|^2/2} dx\Big| \leq A \Big(\int_{B_{R\sqrt{\lam_i}}^c}|x|^2 e^{-|x|^2} dx\Big)^{\frac12} + A \ep  \Big(\int_{B_{R\sqrt{\lam_i}}}|x|^2 e^{-|x|^2} dx\Big)^{\frac12}.
$$
Let $i\to \infty$, we obtain
$$\limsup_{i\to \infty}\Big|c_i \lam_i^{\frac 12}\irn \na u \cdot (\lam_i^{\frac12} x) e^{-|\lam_i^{\frac12} x|^2/2} dx\Big| \leq A \Big(\irn |x|^2 e^{-|x|^2} dx\Big)^{\frac12} \ep.$$
Since $\ep>0$ is arbitrary, then 
$$\limsup_{i\to \infty}\Big|c_i \lam_i^{\frac 12}\irn \na u \cdot (\lam_i^{\frac12} x) e^{-|\lam_i^{\frac12} x|^2/2} dx \Big|=0.$$
This together with \eqref{eq:expan} implies
$$\frac18 \geq \lim_{i\to \infty} \irn |\na u -\na \varphi_i|^2 dx > 1$$
which is impossible.{ Concerning the lower bound of $\lambda_i$,}  suppose, up to extract a subsequence, that
$$\lim_{i\to \infty} \lam_i = 0.$$
For any $\epsilon >0$, there exists $R>0$ large enough such that $\int_{B_R^c} |\na u|^2 dx < \ep^2$. Hence, it holds
\begin{align*}
\Big|\irn \na u \cdot (\lam_i^{\frac12} x) e^{-|\lam_i^{\frac12} x|^2/2} dx\Big|&\leq \int_{B_R} |\na u| |\lam_i^{\frac12} x| e^{-|\lam_i^{\frac12} x|^2/2} dx + \int_{B_R^c} |\na u|^2 |\lam_i^{\frac12} x| e^{-|\lam_i^{\frac12} x|^2/2} dx\\
&\leq \Big(\int_{B_R} |\na u|^2 dx\Big)^{\frac12} \Big(\int_{B_R}|\lam_i^{\frac12} x|^2 e^{-|\lam_i^{\frac12} x|^2} dx\Big)^{\frac12}\\
&\qquad + \ep \Big(\int_{B_R^c}|\lam_i^{\frac12} x|^2 e^{-|\lam_i^{\frac12} x|^2} dx\Big)^{\frac12}\\
&\leq \lam_i^{-\frac n4}\Big(\int_{B_{R\sqrt{\lam_i}}}|x|^2 e^{-|x|^2} dx\Big)^{\frac12} + \lam_i^{-\frac n4} \ep  \Big(\int_{B_{R\sqrt{\lam_i}}^c}|x|^2 e^{-|x|^2} dx\Big)^{\frac12}
\end{align*}
which implies
$$
\Big|c_i \lam_i^{\frac 12}\irn \na u \cdot (\lam_i^{\frac12} x) e^{-|\lam_i^{\frac12} x|^2/2} dx\Big| \leq A \Big(\int_{B_{R\sqrt{\lam_i}}}|x|^2 e^{-|x|^2} dx\Big)^{\frac12} + A \ep  \Big(\int_{B_{R\sqrt{\lam_i}}^c}|x|^2 e^{-|x|^2} dx\Big)^{\frac12}.
$$
Let $i\to \infty$, we obtain
$$\limsup_{i\to \infty}\Big|c_i \lam_i^{\frac 12}\irn \na u \cdot (\lam_i^{\frac12} x) e^{-|\lam_i^{\frac12} x|^2/2} dx\Big| \leq A \Big(\irn |x|^2 e^{-|x|^2} dx\Big)^{\frac12} \ep.$$
Since $\ep>0$ is arbitrary, then 
$$\limsup_{i\to \infty}\Big|c_i \lam_i^{\frac 12}\irn \na u \cdot (\lam_i^{\frac12} x) e^{-|\lam_i^{\frac12} x|^2/2} dx \Big|=0.$$
This together with \eqref{eq:expan} implies
$$\frac18 \geq \lim_{i\to \infty} \irn |\na u -\na \varphi_i|^2 dx > 1$$
which is again impossible. So the claim is proved. As a consequence, there are $a_1, a_2, a_3 >0$ such that 
$$|c_i|\leq a_1,\qquad 0< a_2 \leq \lam_i \leq a_3$$
for any $i$. Extracting a subsequence, we can assume that $c_i \to c$ and $\lam_i \to \lam \in [a_2,a_3]$. Denote $\varphi = c e^{-\lam |x|^2/2} \in E $. We then have $\na \varphi_i \to \na \varphi$ in $L^2$ and hence
$$ \irn |\na u- \na \varphi|^2 dx = \lim_{i\to \infty} \irn |\na u -\na \varphi_i|^2 dx = \inf_{\varphi \in E} \irn |\na u -\na \varphi|^2 dx.$$
Remark that 
$$\irn |\na u|^2 dx =1,\quad \irn |\na u- \na \varphi|^2 dx = \inf_{\varphi \in E} \irn |\na u -\na \varphi|^2 dx \leq \frac 18$$ 
then $\varphi\not\equiv 0$. So we can choose a positive constant $a$ such that 
$$\irn |\na (a \varphi)|^2 dx = \irn |\na u|^2 dx = 1$$
or, equivalently $a = (\irn |\na \varphi|^2 dx)^{-\frac12}$. By the triangle inequality, we have
\begin{align*}
\Big|\Big(\irn |\na \varphi|^2 dx\Big)^{\frac12} -1\Big| &= \Big|\Big(\irn |\na \varphi|^2 dx\Big)^{\frac12} -\Big(\irn |\na u|^2 dx\Big)^{\frac12}\Big|\\
&\leq \Big(\irn |\na u-\na \varphi|^2 dx\Big)^{\frac12} \\
&\leq \sqrt{12(n+2) \delta(u)}. 
\end{align*}
Using this estimate and the Cauchy-Schwartz inequality, we obtain
\begin{align*}
\irn |\na u - \na(a\varphi)|^2 dx &= \irn |\na u -\na \varphi + (1-a)\na \varphi|^2 dx\notag\\
&\leq 2 \irn |\na u -\na \varphi|^2 dx + 2 (a-1)^2 \irn |\na \varphi|^2 dx\notag\\
&\leq 24(n+2) \delta(u) + 2 \Big|\Big(\irn |\na \varphi|^2 dx\Big)^{\frac12} -1\Big|^2\notag\\
&\leq 48(n+2) \delta(u).
\end{align*}
This gives
\begin{equation}\label{eq:smalldeficit}
\delta(u) \geq \frac{1}{48(n+2)} \inf\Big\{\frac{\irn |\na u -\na \varphi|^2 dx}{\irn |\na u|^2 dx} \, :\, \varphi\in E, \irn |\na u|^2 dx = \irn |\na \varphi|^2 dx \Big\},
\end{equation}
when $\delta (u) \leq \frac{1}{96(n+2)}$.

We next prove the theorem for $\delta (u) > \frac{1}{96(n+2)}$. Since we always have
$$\inf\Big\{\frac{\irn |\na u -\na \varphi|^2 dx}{\irn |\na u|^2 dx} \, :\, \varphi \in E, \irn |\na u|^2 dx = \irn |\na \varphi|^2 dx \Big\} \leq 4$$
then
\begin{equation}\label{eq:largedeficit}
\delta(u) \geq \frac{1}{384(n+2)} \inf\Big\{\frac{\irn |\na u -\na \varphi|^2 dx}{\irn |\na u|^2 dx} \, :\, \varphi \in E, \irn |\na u|^2 dx = \irn |\na \varphi|^2 dx \Big\}.
\end{equation}
The inequality \eqref{eq:Stabestimate} follows immediately from \eqref{eq:smalldeficit} and \eqref{eq:largedeficit}.
\end{proof}

\section*{Acknowledgments}
This work was initiated and done when the second author visit Vietnam Institute for Advanced Study in Mathematics (VIASM) in 2020. He would like to thank the institute for hospitality and support during the visit.

\end{document}